\documentclass[11pt]{amsart}
\usepackage{amsmath,amscd,amssymb}
\usepackage[mathscr]{eucal}
\usepackage[all]{xy}

\newtheorem{theorem}{Theorem}[section]
\newtheorem{lemma}[theorem]{Lemma}
\newtheorem{proposition}[theorem]{Proposition}
\newtheorem{corollary}[theorem]{Corollary}

\theoremstyle{definition}

\theoremstyle{remark}
\newtheorem{remark}[theorem]{Remark}

\numberwithin{equation}{section}

\def\la{\lambda}
\def\al{\alpha}
\def\si{\sigma}
\def\e{\varepsilon}
\def\de{\delta}
\def\NN{{\mathbb N}}

\def\CC{{\mathbb C}}
\def\TT{{\mathbb T}}
\def\Re{{\rm Re}\,}
\def\Int{{\rm Int}\,}
\def\codim{{\rm codim}\,}
\def\conv{{\rm conv}\,}

\begin{document}

\title[Circles in the spectrum and the geometry of orbits]{Circles in the spectrum and the geometry \\
of orbits: a numerical ranges approach}

\author{Vladimir M\"uller}
\address{Institute of Mathematics\\
Czech Academy of Sciences\\
115 67 \v Zitna 25\\
Prague, Czech Republic}

\email{muller@math.cas.cz}

\author{Yuri Tomilov}
\address{Institute of Mathematics\\
Polish Academy of Sciences\\
\'Sniadeckich Str. 8\\
00-956 Warsaw, Poland
 }

\email{ytomilov@impan.pl}

\thanks{This work was  partially supported by the NCN grant
 2014/13/B/ST1/03153, by the EU grant  ``AOS'', FP7-PEOPLE-2012-IRSES, No 318910, by 17-00941S of GA CR and RVO:67985840.}

\subjclass{Primary 47A05, 47A10, 47A12; Secondary 47A30, 47A35, 47D03}

\keywords{ Numerical range, spectrum, orbits of linear operators, orthogonality, convergence of operator iterates}


\begin{abstract}
We prove that a bounded linear Hilbert space operator has the unit circle in its essential approximate point spectrum if and only if it admits an orbit satisfying certain orthogonality and almost-orthogonality relations.
This result is obtained via the study of numerical ranges of operator tuples where several new results are also obtained.
As consequences of our numerical ranges approach, we derive in particular wide generalizations of Arveson's theorem as well as show that
the weak convergence of operator powers implies the uniform convergence of their compressions on an infinite-dimensional
subspace.
\end{abstract}

\maketitle
\section{Introduction}

It is well-known that in the study of invariant subspaces of a bounded linear operator $T$ on a Hilbert space, the presence of the
unit circle $\mathbb T$ in the spectrum  $\sigma(T)$ of $T$ plays a special role.
According to one of the strongest results in this direction due to Brown, Chevreau and Pearcy \cite{Brown}, see also \cite{Bercovici} and \cite[p.156 -157]{Nagy},
if $T$ is a Hilbert space contraction having the unit circle in its spectrum,
then $T$ has a non-trivial invariant subspace. The statement was extended  to Banach spaces and to polynomially bounded operators,
\cite{MullerAmb}. For this and related statements one may also consult
 the recent survey \cite{BercoviciSurv}, and the books \cite[Chapter 5]{Muller} and \cite{Chalendar}.
However  the spectral condition $\sigma(T)\supset\mathbb T$ appeared to be again crucial. Thus,
it is of substantial interest to clarify its interplay with the behavior of orbits
of $T.$

That issue has not received adequate attention in the literature. Curiously enough, known results on the implications of
the circle structure of the spectrum for the geometry of orbits
have been noted in an area somewhat distant from  classical operator theory.
A long time ago, Arveson proved in \cite{Arveson} that the spectrum of a unitary operator $T$ on $H$ is precisely the unit circle $\mathbb T$ if and only if
for every $n \in \mathbb N$ there exists a nonzero $x \in H$ such that the elements
$x, Tx, ..., T^n x$ are mutually orthogonal.
His motivation for this kind of results originated from the intent to identify
the maximal ideals space $\mathcal M$ of the $C^*$-algebra generated by an abelian group of unitary operators $G.$
He proved that $\mathcal M$ is homeomorphic to the character group $\hat G$ of the discrete group $G$  if for
every finite subset $F$ of $G$ there is $x \in H$ such that  $U x \perp V x$ for all $U, V$ from $F$.

Arveson's nice result can be considered as an operator-theoretical version of the well-known Rokhlin Lemma, a basic tool
in ergodic theory. Recall that one of the most common variants of the Rokhlin Lemma says that if $\mathcal P=(\Omega, \mathcal B, \mu)$ is a diffuse  Lebesgue probability measure space
and $S$ is an aperiodic invertible measure-preserving transformation of $\mathcal P,$ then for all $\epsilon >0$ and $n \in \mathbb N$ there exists
a measurable set $B \subset \Omega$ such that the sets $B, S(B), ..., S^{n-1}(B)$ are disjoint and $\mu(B\cup S(B)\cup ... \cup S^{n-1}(B))\ge 1-\epsilon,$
 see e.g. \cite{Kornfeld} and \cite{Weiss}. A discussion of a similar but  weaker statement can also be found in \cite{Arveson}.
If the operator $U_S$ on $L^2(\mathcal P)$ is defined
as
$$
(U_S f)(\omega)= f(S\omega), \qquad f \in L^2 (\mathcal P), \qquad \text{a.e.} \, \omega \in \Omega,
$$
then as a direct consequence of the Rokhlin Lemma one gets $\sigma (U_S)=\mathbb T.$ (A different proof that  $\sigma (U_S)=\mathbb T$ has been proposed in \cite{Ionescu}.)
Note that the orthogonal vectors in the Arveson's statement arise naturally here as the corresponding characteristic functions of   $B, S(B), ..., S^{n-1}(B).$  Concerning connections to ergodic theory see also a discussion in \cite[p. 206-207]{Arveson}.

The Arveson's result has been put in a much broader setting here. In particular, within our framework,
we are able to treat \emph{arbitrary} bounded operators under Arveson's spectral assumption, and to obtain the next result on this way.
Related statements can be found in \cite{MullerCrelle}.
\begin{theorem}\label{main1}
Let $T$ be a bounded linear operator on $H$. The following statements are equivalent.
\begin{itemize}
\item [(i)] $\TT$ is contained in the essential approximate point spectrum of $T$;

\item [(ii)] for all $\e>0$ and $n\in\NN$ there exists $x\in H$ such that
$$
|\langle T^mx,T^jx\rangle|<\e, \qquad 0\le m,j\le n-1, m\ne j,
$$
and
$$
\frac{1}{2}\le \|T^jx\|\le 2, \qquad 0\le j\le n-1;
$$

\item [(iii)] for all $\e>0$ and $n\in\NN$ there exists $x\in H$ such that
\begin{align*}
x\perp T^j x,& \qquad 1 \le j \le n-1, \\
|\langle T^mx,T^jx\rangle|<\e,& \qquad 1\le m,j\le n-1, m\ne j,\\
1-\e <\|T^jx\|< 1+\e,& \qquad 0\le j\le n-1,
\end{align*}
and
$$
\|T^nx-x\|<\e.
$$
\end{itemize}
\end{theorem}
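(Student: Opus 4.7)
The implication (iii) $\Rightarrow$ (ii) is immediate: if one chooses $\e<1/2$ in (iii), then $1-\e<\|T^jx\|<1+\e$ forces $1/2\le\|T^jx\|\le 2$, and the listed inner-product bounds directly give those of (ii). The content of the theorem is therefore in (ii) $\Rightarrow$ (i) and (i) $\Rightarrow$ (iii).

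For (ii) $\Rightarrow$ (i), fix $\lambda_0\in\TT$; to place $\lambda_0$ in the essential approximate point spectrum it suffices to produce, for each $N\in\NN$, an orthonormal family of $N$ vectors $u_1,\dots,u_N$ with $\|(T-\lambda_0)u_k\|$ arbitrarily small. The plan is to apply (ii) with $n:=Nm$ and $\e$ much smaller than $1/(Nm)^2$, partition the orbit $\{T^jx:0\le j<Nm\}$ into $N$ disjoint blocks of length $m$, and form the block-averaged vectors
\begin{equation*}
y_k:=\frac{1}{\sqrt m}\sum_{j=0}^{m-1}\overline{\lambda_0}^{\,j}\,T^{j+km}x,\qquad 0\le k\le N-1.
\end{equation*}
A telescoping computation gives $(T-\lambda_0)y_k=m^{-1/2}\bigl(\overline{\lambda_0}^{\,m-1}T^{(k+1)m}x-\lambda_0T^{km}x\bigr)$, whence $\|(T-\lambda_0)y_k\|=O(1/\sqrt m)$, while the disjointness of the blocks yields $\|y_k\|\in[1/2,2]$ (up to $O(m\e)$) and $|\langle y_k,y_\ell\rangle|=O(m\e)$ for $k\ne\ell$. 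A Gram--Schmidt cleanup of the normalised $y_k/\|y_k\|$ then produces the desired orthonormal family, proving $\lambda_0\in\sigma_{ea}(T)$.

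For (i) $\Rightarrow$ (iii), the central construction averages over the $n$-th roots of unity $\lambda_k:=e^{2\pi ik/n}$. Since each $\lambda_k$ lies in the essential approximate point spectrum, I inductively select an orthonormal system $e_0,\dots,e_{n-1}$ with $\|(T-\lambda_k)e_k\|<\delta$, choosing each $e_k$ in the orthogonal complement of the previously selected vectors together with their images under $T,T^2,\dots,T^{n-1}$ (possible because each element of $\sigma_{ea}(T)$ admits a weakly null singular sequence). Setting $x_0:=n^{-1/2}\sum_k e_k$, the identity $T^je_k=\lambda_k^je_k+O(\delta)$ together with the discrete Fourier cancellation $\sum_k\lambda_k^p=n\cdot\mathbf{1}_{\{n\,\mid\,p\}}$ yields $\|x_0\|=1$, $\|T^jx_0\|=1+O(\delta)$, $\|T^nx_0-x_0\|=O(\delta)$, and $|\langle T^jx_0,T^mx_0\rangle|=O(\delta)$ for all distinct $j,m\in\{0,\dots,n-1\}$, with constants depending only on $\|T\|$ and $n$. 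For $\delta$ sufficiently small in terms of $\e$ and $n$, every clause of (iii) holds \emph{except} that the exact orthogonality $x\perp T^jx$ is replaced by the approximate $|\langle x_0,T^jx_0\rangle|<\delta$.

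The remaining task --- upgrading this approximate orthogonality to an exact one --- is the principal technical obstacle. My plan is to produce a small perturbation $\tilde x$ of $x_0$ in an appropriate finite-dimensional subspace so that $\langle\tilde x,T^j\tilde x\rangle=0$ holds exactly for $1\le j\le n-1$, and then to verify that the remaining (approximate) conditions of (iii) are preserved up to an additional $O(\delta)$, absorbed by choosing $\delta$ smaller at the outset. The linearisation of the orthogonality conditions, expressed in coordinates provided by $(T^jx_0)_j$ together with the near-cyclic identity $T^nx_0\approx x_0$, involves a Fourier/Vandermonde matrix that is well-conditioned (its invertibility being essentially the cyclic symmetry of the $n$-th roots of unity), so a contraction or implicit function argument should supply the required correction with norm $O(\delta)$. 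The more elegant route within the paper's framework is surely to encode this step as an application of an essential joint numerical range theorem for a suitable tuple of operators, extracting $\tilde x$ directly from the numerical-ranges machinery developed in the earlier sections and avoiding the explicit perturbative calculation altogether.
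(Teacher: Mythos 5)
Your handling of (iii)$\Rightarrow$(ii), of (ii)$\Rightarrow$(i), and of the first, approximate half of (i)$\Rightarrow$(iii) is sound and essentially coincides with the paper's argument: block sums $\sum_j\overline{\lambda}^{\,j}T^{j+km}x$ over disjoint stretches of the orbit produce many almost-orthogonal approximate eigenvectors (the paper finishes with a compactness contradiction against $\dim N(T-\lambda)<\infty$ rather than a Gram--Schmidt cleanup, but both work, and the criterion you invoke for membership in $\sigma_{\pi e}(T)$ is correct), and averaging approximate eigenvectors at the $n$-th roots of unity gives a vector satisfying every clause of (iii) up to $O(\delta)$.

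The genuine gap is exactly where you place it, and your concrete plan for closing it would fail. A perturbation $\tilde x=x_0+h$ with $h$ taken ``in coordinates provided by $(T^jx_0)_j$'' cannot restore exact orthogonality, for a dimension-count reason: in the model situation $x_0=n^{-1/2}\sum_k e_k$ with $Te_k=\lambda_k e_k$, $\lambda_k=e^{2\pi ik/n}$, every vector $y=\sum_k\gamma_k e_k$ of the cyclic span satisfies $\langle y,T^jy\rangle=\sum_k|\gamma_k|^2\overline{\lambda_k}^{\,j}$, so the map $y\mapsto(\langle y,T^jy\rangle)_{j=1}^{n-1}$ takes values in the image of a real-linear map from $\RR^n$ into $\CC^{n-1}\cong\RR^{2n-2}$ and its real linearization at $x_0$ has rank at most $n$. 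For $n\ge 3$ this is never surjective onto $\CC^{n-1}$, while the $O(\delta)$ error to be cancelled is a general element of $\CC^{n-1}$; no implicit-function or contraction argument inside the orbit span can succeed. The correction must come from fresh directions orthogonal to the finite-dimensional data $\{x,T^jx,T^{*j}x\}$, and to control $\langle T^ju,u\rangle$ for such directions one needs precisely that an entire ball around the origin lies in $W_e(T,\dots,T^{n-1})$, i.e.\ $(0,\dots,0)\in\Int W_e(T,\dots,T^{n-1})$. This is the content of Theorem \ref{lambdawe}, whose proof requires the approximation results of Lemma \ref{Riesz}, Lemma \ref{corol} and Proposition \ref{4.5} (writing small tuples $(\e_1,\dots,\e_{n-1})$ as convex combinations $\bigl(\sum_i c_i\lambda_i^j\bigr)_j$ with $\lambda_i\in\sigma_{\pi e}(T)$), and it must then be combined with the iterative correction scheme of Proposition \ref{basic}, which converts membership of $(0,\dots,0)$ in the interior of the essential joint numerical range into an exact zero of the numerical map at a nearby unit vector (this is also why the paper first rescales to $w=(1-2^{-c})^{1/2}v$ of norm slightly below one, leaving room for the added corrections). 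Your alternative ``numerical-ranges route'' points at this machinery but does not state or prove either ingredient, so the key step of (i)$\Rightarrow$(iii) remains unestablished.
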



Orthogonality relations for orbits of general bounded operators have also been studied before, in terms of a concept of  weakly wandering vectors originating from ergodic theory.
Recall that
 a vector $x$ is called weakly wandering for a bounded operator $T$ on $H$ if there is a strictly increasing subsequence $(n_k)$  (depending on $x$)
 such that the elements $(T^{n_k} x)$ are mutually orthogonal. Generalizing a classical result due to Krengel,
 one can prove for example that if $T$ is power bounded, $\sigma(T)\cap \mathbb T$
is infinite, and $\sigma_p(T)\cap \mathbb T$ is empty, then the set of weakly wandering vectors of $T$ is dense in $H,$ see \cite{MullerTom}
and the references therein.
While in contrast to Theorem \ref{main1} the sequence $(n_k)$ is infinite, one has, in general, no control on it.
Thus the results of this kind are essentially different from the ones in the present paper, although
the technique has some points in common.

Another motivation for the study of the circle structure of the spectrum stems again from ergodic theory,
namely from the research on mixing dynamical systems. Recently, using harmonic analysis arguments,
Hamdan proved in \cite{Hamdan} that if a unitary operator $T$ on $H$ is such that $T^n \to 0$ in the weak operator topology,
then  $\sigma(T)=\mathbb T$  if and only if for every $\epsilon >0$ there exists a unit vector $x \in H$
satisfying
$$
\sup_{n \ge 1} |\langle T^n x,x \rangle| <\epsilon.
$$
The result was inspired by recent results due to Bashtanov and Ryzhikov on fine structure of mixing transformations, see \cite{Hamdan}
for a relevant discussion.
Our approach leads to the following somewhat surprising generalization of a part of Hamdan's theorem (see Corollary \ref{hamdancorollarygeneral} below).
\begin{theorem}\label{intro2}
Let $T$ be a bounded linear operator on $H$  such that $T^n \to 0$ in the weak operator topology.
If $\sigma(T) \supset \mathbb T,$
then for every $\epsilon >0$ there exists an infinite-dimensional subspace $L$ of $H$ such that
$$
\lim_{n \to \infty} \|P_L T^n P_L\|=0 \qquad \text{and} \qquad \sup_{n \ge 1} \|P_L T^n P_L\|<\epsilon,
$$
where $P_L$ is the orthogonal projection on $L.$
\end{theorem}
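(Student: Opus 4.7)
The plan is to combine Theorem~\ref{main1} with a gliding-hump construction driven by the weak convergence $T^n \to 0$. First, the hypothesis must be recast in the language of Theorem~\ref{main1}: the goal is to show $\TT \subseteq \sigma_{e,ap}(T)$. Since $T^n \to 0$ weakly, the uniform boundedness principle gives that $T$ is power bounded. If $T$ (respectively $T^*$, whose powers also tend weakly to zero) had an eigenvalue $\lambda \in \TT$ with eigenvector $v$, then $\langle T^n v, v\rangle = \lambda^n \|v\|^2$ could not vanish, a contradiction. Thus neither $T$ nor $T^*$ has unimodular eigenvalues; since $\lambda \in \sigma_r(T)$ forces $\bar\lambda \in \sigma_p(T^*)$, we get $\TT \cap \sigma(T) \subseteq \sigma_{ap}(T)$. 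As the kernel of $T - \lambda$ is trivial for $\lambda \in \TT$, an upper semi-Fredholm $T - \lambda$ would be bounded below, contradicting $\lambda \in \sigma_{ap}(T)$; hence $\TT \subseteq \sigma_{e,ap}(T)$, and Theorem~\ref{main1} applies.

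Next, construct an orthonormal sequence $(u_k)_{k \ge 1}$ in $H$ inductively. Fix tolerances $\eta_k = c\e \cdot 2^{-k}$ for a small absolute constant $c$ chosen so that a Schur-test sum closes. At stage $k$, the vectors $u_1,\ldots,u_{k-1}$ are fixed, and by the weak convergence of both $(T^n)$ and $((T^*)^n)$ to zero there is a threshold $W_k \in \NN$ such that $|\langle T^n u_i, u_j\rangle| < \eta_k$ for all $i, j \le k-1$ and $n \ge W_k$. Choose $u_k$ in the cofinite-dimensional subspace
\[
M_k = \bigl(\mathrm{span}\{T^m u_j,\, (T^*)^m u_j : 1 \le j \le k-1,\ 0 \le m \le W_k\}\bigr)^{\perp},
\]
via a cofinite refinement of Theorem~\ref{main1}(ii). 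Such a refinement is available because the theorem proceeds through approximate eigenvectors at points of $\TT \subset \sigma_{e,ap}(T)$, which may be drawn from any subspace of finite codimension. The outcome is a unit vector $u_k$ whose orbit $(T^j u_k)_{j=0}^{N_k - 1}$ is $\eta_k$-almost orthonormal, with $N_k$ taken arbitrarily large.

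Setting $L := \overline{\mathrm{span}}\{u_k : k \ge 1\}$, the matrix $M^{(n)}_{ij} := \langle T^n u_i, u_j\rangle$ is controlled in three complementary regimes: by the exact orthogonality defining $M_k$ for $n \le W_k$ whenever $\max(i,j) = k$; by the orbital almost-orthonormality of Theorem~\ref{main1}(ii) on the diagonal for $n \le N_k - 1$; and by the weak decay of $T^n$ for $n$ beyond $W_{\max(i,j)+1}$. A Schur-test estimate applied to $(M^{(n)}_{ij})$ with the geometric weights $\eta_k$ uniformly bounds $\|P_L T^n P_L\|$ by a constant multiple of $\sum_k \eta_k < \e$, which yields $\sup_{n\ge 1} \|P_L T^n P_L\| < \e$. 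The pointwise decay $\langle T^n u_i, u_j\rangle \to 0$ together with the geometric majorant on row and column sums then gives $\|P_L T^n P_L\| \to 0$ by dominated convergence; infinite-dimensionality of $L$ is immediate from its construction as the closed span of an orthonormal sequence.

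The main obstacle is the interlocking of two scales: the orbital length $N_k$ must be fixed \emph{before} $u_k$ is produced, yet the weak-decay threshold $W_{k+1}$ that covers the tail of the orbit of $u_k$ is revealed only \emph{after} $u_k$ is known. To bridge the intermediate range $N_k \le n < W_{k+1}$ one must either calibrate $N_k$ through an inner iteration (guess $W_{k+1}$, produce $u_k$, recompute, iterate) or---more cleanly---strengthen the cofinite form of Theorem~\ref{main1}(ii) so that the output vector $u_k$ has an intrinsic WOT-decay rate commensurate with $N_k$. The numerical-range machinery developed earlier in the paper furnishes precisely the flexibility required for such a strengthening.
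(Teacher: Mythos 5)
Your reduction to $\TT\subseteq\sigma_{\pi e}(T)$ is correct (though the paper gets it more directly from power boundedness, $r(T)\le 1$ and \eqref{inclusion}), and the overall architecture --- build an orthonormal sequence $(u_k)$ by a gliding hump in cofinite-codimensional subspaces, set $L=\overline{\rm span}\{u_k\}$, and estimate the matrix $\langle T^nu_i,u_j\rangle$ --- is the paper's. But the obstacle you name in your last paragraph is not a technicality to be deferred; it is the entire content of the proof, and your proposal does not close it. Since $u_k$ is a unit vector of $L$, you need $\sup_{n\ge1}|\langle T^nu_k,u_k\rangle|\lesssim\e$ for \emph{each} $k$ individually, uniformly over \emph{all} $n$. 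Theorem \ref{main1}(ii)--(iii) (even in a cofinite refinement) only controls $\langle T^mx,T^jx\rangle$ for $m,j\le n-1$; for a vector produced this way, $|\langle T^Nx,x\rangle|$ can be of order $K=\sup_m\|T^m\|$ for $N$ in the window between the prescribed orbit length and the (vector-dependent) onset of weak decay, and no choice of $N_k$ made before $u_k$ exists can cover that window. The same failure occurs off the diagonal: for $n$ strictly between your thresholds $W_k$ and $W_{k+1}$, the entries $\langle T^nu_k,u_j\rangle$, $j<k$, are neither annihilated by the orthogonality defining $M_k$ nor yet small by weak decay, so row and column $k$ of $M^{(n)}$ contain up to $2k$ entries of size up to $K$ and the Schur test does not yield anything better than $O(K)$. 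Your two suggested remedies do not work as stated: the ``inner iteration'' is circular (each new $u_k$ moves $W_{k+1}$), and the ``intrinsic WOT-decay rate'' is not a property Theorem \ref{main1} can be refined to deliver.

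What actually bridges the gap is the paper's Lemma \ref{lemmahamdan}: given the finite set $A$ of previously constructed data and a cofinite subspace $M$, it produces a \emph{single} unit vector $x\in M$ with $\sup_{n\ge1}|\langle T^nx,x\rangle|\le\e$ and $\sup_{n\ge1}|\langle T^{\pm n}x,a\rangle|\le\e$ for $a\in A$ --- uniform over all $n$, with no uncontrolled window. It does so by an internal averaging $x=s^{-1/2}\sum_{k=1}^su_k$ over $s>16K^2\e^{-2}$ blocks with interleaved thresholds, chosen using $(0,\dots,0)\in W_e(T,\dots,T^m)$ on cofinite subspaces (Corollary \ref{essencenumrange} and Theorem \ref{lambdawe}) together with weak decay; at any given $n$ at most one block sits in its ``bad'' window, and its contribution is damped by the factor $s^{-1}$ ($\le K\sqrt{r+1}/s\le\e/4$). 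This averaging step is the missing idea; once you have Lemma \ref{lemmahamdan}, your outer induction and the passage from numerical radius to norm via $\|S\|\le 2w(S)$ go through exactly as in Theorem \ref{theoremhamdangeneral}.
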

In other words, for each $T$ with weakly vanishing powers there is a nontrivial block-decomposition of $T^n$ in $H=L\oplus L^{\perp}:$
$$
T^n=
\begin{pmatrix}
P_L T^n P_L & *\\
* & *
\end{pmatrix}
$$
such that the left upper corner acts on an \emph{infinite-dimensional} space and vanishes \emph{uniformly} !

Of course, as far as we consider \emph{arbitrary} bounded operators, our approach is necessarily more delicate and involved than the ones in
  e.g. \cite{Arveson} and \cite{Hamdan}.
  A particular novelty is that in our studies of orbits we rely on the numerical ranges methodology. The  condition of orthogonality of  elements from an orbit of a bounded operator $T$ can be recasted in terms of the joint numerical range of the tuple $\mathcal T=(T, ..., T^n).$
 On the other hand, as we prove below, the joint numerical range $W(\mathcal T)$ of $\mathcal T$ contains the interior of the essential joint numerical range $W_e (\mathcal T)$ of $\mathcal T.$
 This and similar facts allow us  to construct the desired elements  from the (essential) approximate eigenvalues using inductive arguments. Other instances of these inductive arguments can be found e.g. in \cite[Chapter 5]{Muller}.
 There is certain similarity between the methods employed in this paper and a famous S. Brown's technique used for the study of invariant subspaces
 of bounded operators, see e.g. \cite{BercoviciSurv} and \cite[Chapters 3 and 4]{Chalendar}.
 To give a flavor of our results on numerical ranges, we formulate the following statement, proved
in Section \ref{numersection} (see Corollary  \ref{essencenumrange} and Theorem \ref{lambdawe}  below). It is a heart matter for subsequent considerations.
\begin{theorem}\label{main3}
Let $\mathcal T=(T_1,\dots,T_n), n \in \mathbb N,$  be an $n$-tuple of bounded linear operators on $H.$ Then
$W (\mathcal T)$ contains the interior of $W_e(\mathcal T).$
If $\mathcal T=(T,\dots,T^n)$ for some bounded linear operator $T$ on $H,$ then
the interior of $W_e (T,\dots,T^n)$ contains any tuple $(\lambda, \dots, \lambda^n)$ with $\lambda$
from the interior of the polynomial convex hull of $\sigma (T).$
\end{theorem}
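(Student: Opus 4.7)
For the first assertion, I would fix $\alpha=(\alpha_1,\dots,\alpha_n)\in\Int W_e(\mathcal T)$ and aim to produce a unit vector $x\in H$ with $\langle T_jx,x\rangle=\alpha_j$ for every $j$. Since $W_e(\mathcal T)$ is convex (a standard fact) and $\alpha$ is interior, pick finitely many $\beta^{(0)},\dots,\beta^{(N)}\in W_e(\mathcal T)$ whose convex hull $\Delta$ is a full-dimensional neighbourhood of $\alpha$ in $\mathbb C^n$. For each vertex, the characterisation of $W_e$ via weakly null orthonormal sequences supplies $(x_k^{(i)})_k$ with $x_k^{(i)}\to 0$ weakly and $\langle T_jx_k^{(i)},x_k^{(i)}\rangle\to\beta_j^{(i)}$. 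A diagonalisation in $k$ combined with Gram--Schmidt yields orthonormal vectors $y^{(0)},\dots,y^{(N)}$ for which $\langle T_jy^{(i)},y^{(l)}\rangle$ is as close to $\delta_{il}\beta_j^{(i)}$ as desired. A unit vector $x=\sum_ic_iy^{(i)}$ then satisfies $\langle T_jx,x\rangle\approx\sum_i|c_i|^2\beta_j^{(i)}$, and the map $(|c_i|^2)_i\mapsto\sum_i|c_i|^2\beta^{(i)}$ carries the interior of the standard probability simplex onto $\Int\Delta$. With the tolerance fixed in advance relative to the distance from $\alpha$ to $\partial\Delta$, a Brouwer/invariance-of-domain argument then lets me solve $\langle T_jx,x\rangle=\alpha_j$ exactly; this final topological passage from approximate to exact identity is the principal subtlety here.

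For the second assertion, write $\hat\sigma(T)$ for the polynomial convex hull of $\sigma(T)$, fix $\lambda\in\Int\hat\sigma(T)$, and let $\Omega$ be the connected component of $\Int\hat\sigma(T)$ containing $\lambda$. The plan proceeds in three stages. Stage one: show $(\mu,\mu^2,\dots,\mu^n)\in W_e(T,\dots,T^n)$ for every $\mu\in\partial\Omega$. Points of $\partial\Omega$ lie in $\partial\hat\sigma(T)\subseteq\partial\sigma(T)$, hence in the approximate point spectrum; the key technical point (for which a compact perturbation $T\mapsto T+K$ is harmless, because $(T+K)^j-T^j$ is compact and so $W_e(T,\dots,T^n)$ is unchanged) is to verify $\partial\Omega\subseteq\sigma_e^{ap}(T)$. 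Once this is in hand, for each $\mu\in\partial\Omega$ an orthonormal weakly null sequence $(x_k)$ with $\|(T-\mu)x_k\|\to 0$ gives $\langle T^jx_k,x_k\rangle\to\mu^j$ for every $j$. Stage two: convexity of $W_e$ places the closed convex hull $M$ in $\mathbb C^n$ of the curve $\{(\mu,\mu^2,\dots,\mu^n):\mu\in\partial\Omega\}$ inside $W_e(T,\dots,T^n)$. Stage three: polynomial convexity combined with Hahn--Banach and the Riesz representation theorem yields a representing probability measure $\omega$ on $\partial\Omega$ with $\int p\,d\omega=p(\lambda)$ for every polynomial $p$; specialising to $p(z)=z^k$, $1\le k\le n$, gives $(\lambda,\dots,\lambda^n)\in M\subseteq W_e(T,\dots,T^n)$.

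To upgrade this to \emph{interior} membership, suppose for contradiction $(\lambda,\dots,\lambda^n)\in\partial M$. A supporting real-linear functional at this point takes the form $\ell(z_1,\dots,z_n)=\Re(c_1z_1+\cdots+c_nz_n)$ and forces $\Re q(\lambda)=\max\{\Re q(\mu):\mu\in\partial\Omega\}$ with $q(z)=c_1z+\cdots+c_nz^n$, $q(0)=0$. The maximum principle applied to the harmonic function $\Re q$ on the bounded open connected set $\Omega$ then forces $\Re q$, and thus $q$, to be constant on $\Omega$, hence $q\equiv 0$; all $c_j$ vanish, contradicting nontriviality of $\ell$. Therefore $(\lambda,\dots,\lambda^n)\in\Int M\subseteq\Int W_e(T,\dots,T^n)$. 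The most delicate ingredient throughout is the inclusion $\partial\Omega\subseteq\sigma_e^{ap}(T)$ in stage one: it requires a genuine Fredholm-theoretic argument to exclude upper-semi-Fredholm points from $\partial\Omega$, exploiting that such points would be isolated in $\sigma(T)$ and force $\zeta\in\Int\hat\sigma(T)$, contradicting $\zeta\in\partial\Omega$.
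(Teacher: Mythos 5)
Your proposal is correct, but both halves take genuinely different routes from the paper's. For the first assertion, the paper runs an iterative correction scheme (Proposition \ref{basic}): starting from $x=0$ it builds a Cauchy sequence $x_k\to w$ with $\|x_k\|^2=1-2^{-k}$, at each step adding a small multiple of a unit vector $u$, orthogonal to a finite-dimensional subspace, whose numerical-range value is approximately $-2^{k+1}\langle\mathcal Tx_k,x_k\rangle$ (this is where interiority of the point in $W_e$ is used); the limit satisfies $\langle T_jw,w\rangle=\la_j$ exactly. Your simplex-plus-degree argument also works, provided you take exactly $2n+1$ affinely independent vertices so that the barycentric map is a homeomorphism onto $\Delta$ (with more vertices, invariance of domain does not apply directly and you would need a genuine degree argument on a quotient); alternatively you could kill the cross terms exactly by choosing each $y^{(l)}$ orthogonal to $T_jy^{(i)},T_j^*y^{(i)}$ for $i<l$ via the finite-codimension characterization of $W_e$, which removes the topological step altogether. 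What the paper's scheme buys, and yours does not, is the stronger conclusion of Corollary \ref{essencenumrange} that the vector can be found in any finite-codimension subspace together with an explicit distance estimate -- both are used later (e.g.\ in Theorem \ref{circlegeneral}). For the second assertion, the paper is quantitative: it proves (Lemma \ref{corol}) by an explicit induction with roots of unity that every moment tuple $\e$ with $\|\e\|\le r$ is an \emph{exact} convex combination of $(\mu,\dots,\mu^n)$, $\mu\in\TT_\rho$, transfers this to $K=\sigma_{\pi e}(T)$ via a representing measure, and realizes the combinations by orthogonal approximate eigenvectors, so that an explicit ball around $(\la,\dots,\la^n)$ lands in $W_e$. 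Your argument is soft: representing measure on $\partial\Omega$ to get membership in the convex hull $M$ of the moment curve, then the strong maximum principle applied to $\Re q$, $q(z)=\sum c_jz^j$, to exclude a supporting hyperplane at $(\la,\dots,\la^n)$ -- this is an elegant shortcut that correctly handles even the degenerate case where $M$ might a priori lie in a hyperplane, and your Fredholm-theoretic verification of $\partial\Omega\subset\sigma_{\pi e}(T)$ reproves by hand what the paper cites from \cite{Muller}. The price is again the loss of the uniform radius $r$, which the paper's later arguments (feeding Theorem \ref{lambdawe} into Proposition \ref{basic}) actually require; for the theorem as stated, your proof is complete.
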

Theorem \ref{main3} can be considered as a partial generalization of  \cite[Theorem 2.2]{Wrobel} dealing with
numerical ranges of operators on Banach spaces. Note that while the result from in \cite{Wrobel} allows one to
find parts of the spectrum of $\mathcal T$ in the closure $\overline{W(\mathcal T)}$ of $W(\mathcal T),$ we can replace $\overline{W(\mathcal T)}$
by a smaller and more transparent set $W(\mathcal T),$
and this has direct implications for orbits orthogonality.

We stress that while our results mentioned above can formally be considered
as generalizations of a previous work, they are of a different nature
since we are dealing with subspaces rather than elements of a Hilbert space,
and the generality of our setting necessitates the use of new ideas.

\section{Notation}
It will be convenient to fix some of the notations in a separate section. In particular, we let
$H$ be a Hilbert space with the inner product $\langle\cdot ,\cdot \rangle,$ and $B(H)$ the space of all bounded linear operators on $H$.
For a bounded linear operator $T$ on $H$ we denote by $\sigma (T)$ its spectrum, by $r(T)$ its spectral radius and by $N(T)$ its
kernel.

For a closed set $K \subset \mathbb C^n $ we denote by $\partial K$ the topological boundary of $K,$ by $\overline{K}$ the closure of $K,$ by ${\rm Int} \, K$ the interior of $K,$
by ${\rm conv}\, K$ the convex hull of $K,$ and by $\widehat K$
the polynomial convex hull of $K$. If $K \subset \mathbb C$ then $\widehat K$ is the union of $K$ with all bounded components of the complement $\CC\setminus K$. (In that case taking $\widehat K$ can be viewed as filling ``holes'' that might exist in $K.$)

Finally, we let $\mathbb T$ stand for the unit circle $\{\lambda \in \mathbb C: |\lambda|=1\}.$

\section{Preliminaries}\label{prelim}
We start with recalling certain basic notions and facts from the spectral theory of operator tuples on Hilbert spaces.
They can be found e.g. in \cite[Chapters II.9,10 and III.18,19]{Muller}. See also \cite{Fillmore} and \cite{Dash}.
In the following we consider an $n$-tuple $\mathcal T=(T_1,\dots,T_n)\in B(H)^n$, $n \in \mathbb N.$ Note that we do not in general
assume that the operators $T_j$ commute. For $x,y\in H$ we write shortly $\langle \mathcal Tx,y\rangle= (\langle T_1 x,y\rangle,\dots,\langle T_n x,y\rangle)\in\CC^n$ and ${\mathcal T}x=(T_1x,\dots,T_nx)\in H^n$.
Similarly for $\la=(\la_1,\dots,\la_n)\in\CC^n$ we write $\mathcal T-\la=(T_1-\la_1,\dots,T-\la_n)$ and $\|\la\|_{\infty}=\max\{|\la_1|,\dots,|\la_n|\}$.

If $T_i, 1 \le i \le n,$ mutually commute then for  $\mathcal T=(T_1,\dots,T_n)\in B(H)^n$ we denote by  $\sigma({\mathcal T})$ its  joint (Harte) spectrum.
Recall that $\sigma(\mathcal T)$ can be defined as the complement to the set of those $\lambda=(\la_1,\dots,\la_n) \in\mathbb C^n$ for which
$$
\sum_{i=1}^n L_i(T_i-\lambda_i)=\sum_{i=1}^{n}(T_i-\lambda_i)R_i=I
$$
for some $L_i, R_i, 1 \le i \le n,$ from the algebra $B(H).$
If $n=1$ then the joint spectrum as above reduces to the usual spectrum of a single operator.

We will also be using a finer and  somewhat more transparent  notion of the approximate point spectrum $\sigma_\pi(\mathcal T)$ of $\mathcal T$ defined  by
$$
\sigma_\pi(\mathcal T):=\Bigl\{ \lambda=(\lambda_1, ..., \lambda_n) \in \mathbb C^n: \inf_{x \in H ,\|x\|=1} \sum_{j=1}^n\|(T_j -\lambda_j)x\|=0\Bigr\}.
$$
It is well-known that $\sigma({\mathcal T})$ and $\sigma_{\pi}({\mathcal T})$ are non-empty compact subsets of $\mathbb C^n$ and $\sigma_{\pi}({\mathcal T})\subset\si({\mathcal T})$.
There are also other joint spectra of $n$-tuples of commuting operators studied in the literature, for example the Taylor spectrum.
However, in this paper we speak only about the polynomial convex hull of the joint spectrum which coincide for all reasonable joint spectra.

For $n \in \mathbb N$ let $\mathcal T=(T_1,\dots,T_n)\in B(H)^n$ be an $n$-tuple of commuting operators.
One can define the joint essential spectrum $\si_e(\mathcal T)$ as the (Harte) spectrum of the $n$-tuple $(T_1+{\mathcal K}(H),\dots,T_n+{\mathcal K}(H))$ in the Calkin algebra $B(H)/{\mathcal K}(H)$, where ${\mathcal K}(H)$ denotes the ideal of all compact operators on $H$.

For the purposes of this paper, the notion of  the joint essential approximate point spectrum $\sigma_{\pi e}(\mathcal T) $ will be crucial.
Recall that $\sigma_{\pi e}(\mathcal T) $ is the set of all $\la=(\la_1,\dots,\la_n)\in\CC^n$ such that
$$\inf_{x\in M,\|x\|=1}\sum_{j=1}^n\|(T_j-\la_j)x\|=0$$
for every subspace $M\subset H$ of finite codimension.
Again $\si_{\pi e}({\mathcal T})\subset\si_e({\mathcal T})$ and the polynomial convex hulls $\widehat \sigma_e({\mathcal T})$ and $\widehat \sigma_{\pi e }(\mathcal T)$  coincide (see \cite[Corollary III.19.16]{Muller}).

If $n=1$ then
$\si_{e}(T_1)=\{\la_1\in\CC: T_1-\la_1\hbox{ is not Fredholm}\}$ and
$\si_{\pi e}(T_1)=\{\la_1\in\CC: T_1-\la_1\hbox{ is not upper semi-Fredholm}\}.$
Note that the topological boundary of $\partial \sigma(T_1)$ is contained in $\sigma_{\pi}(T_1).$
Analogously,  $\partial \sigma_e(T_1) \subset \sigma_{\pi e}(T_1).$ (Such inclusions are not true anymore for $n\ge 2,$ see e.g. \cite[Section 2.5]{WrobelSt}). Moreover,
$ \partial \sigma (T_1)\setminus \sigma_{\pi e}(T_1)$ and $\sigma (T_1)\setminus \widehat \sigma_{e}(T_1)$
consist of isolated points of $\sigma (T_1)$ (in fact of eigenvalues of $T_1$ of finite multiplicity), see e.g. \cite[p. 184]{Fillmore} and \cite[Theorem III.19.18]{Muller}.
Thus, in particular,
\begin{equation}\label{inclusion}
\mathbb T \subset \sigma (T_1), \,\, r(T_1) \le 1 \qquad \Longrightarrow \qquad \mathbb T \subset \sigma_{\pi e} (T_1).
\end{equation}
If $T\in B(H)$ and $\mathcal T=(T,T^2, \dots, T^n) \in B(H)^n,$ then $\sigma (\mathcal T)=\{(\lambda, \dots, \lambda^n): \lambda \in \sigma (T)\}$ and
$\sigma_\pi (\mathcal T)=\{(\lambda, \dots, \lambda^n): \lambda \in \sigma_\pi (T)\}$. Similar relations are true for the essential spectrum $\si_e$ and essential approximate point spectrum $\si_{\pi e}$.
For the essential spectrum theory in the realm of Hilbert spaces one may also consult \cite{Fillmore} for the case of single operators and \cite{Dash} for the case of $n$-tuples.

As in the case of a single operator, for $\mathcal T=(T_1,\dots,T_n)\in B(H)^n$ it is often useful to relate $\sigma(\mathcal T)$ to a larger and easier computable set $W(\mathcal T)\subset \mathbb C^n$ called the joint numerical
range of $\mathcal T$ and defined as
$$
W(\mathcal T)=\{(\langle T_1 x, x\rangle , ..., \langle T_n x, x \rangle) : x \in H, \|x\|=1\}.
$$
Unfortunately, if  $n>1,$ then $W(\mathcal T)$ is not in general convex, see e.g \cite{Li}.

As in the spectral theory, there is also a notion of the joint essential numerical range  $W_e(\mathcal T)$  associated to $\mathcal T.$
The set $W_e(\mathcal T)$ will be of major importance in our arguments, and it can be described as the set of all $n$-tuples
$\la=(\la_1,\dots,\la_n)\in\CC^n$ such that there exist an orthonormal sequence $(x_k)\subset H$ with
$$
\lim_{k\to\infty}\langle  T_j x_k,  x_k\rangle=\la_j, \qquad j=1,\dots,n.
$$
 Clearly, $W_e(\mathcal T)\supset \sigma_e (\mathcal T).$ Note that $\la\in W_e(\mathcal T)$ if for every subspace $M\subset H$ of finite codimension and every $\de>0$ there exists a unit vector $x\in M$ such that $\|\langle \mathcal T x,x\rangle-\la \|_{\infty}<\de.$
Alternatively, $W_e(\mathcal T)$ can be defined as
$$
W_e(\mathcal T):= \bigcap \overline{W(T_1 + K_1,\dots,T_n+K_n)},
$$
where the intersection is taken over all $n$-tuples $K_1,\dots,K_n$ of compact operators on $H.$
Recall that $W_e(\mathcal T)$ is a compact and, in contrast to $W(\mathcal T),$ \emph{convex} subset of $\overline{W(\mathcal T)},$ see \cite{Li}.
Moreover, if $\mathcal T$ consists of commuting operators, then since $W_e(\mathcal T)$ is convex, $\sigma_{\pi
e}( \mathcal T)\subset W_e( \mathcal T)$  and
the convex hulls of  $\sigma_e(\mathcal T)$ and $\sigma_{\pi e}(\mathcal T)$ coincide (see the proof of Corollary \ref{convex}), one has $W_e(\mathcal T)\supset {\rm conv} \, \sigma_e (\mathcal T).$
For a comprehensive account of essential numerical ranges one may consult \cite{Li} and the references therein.

\section{Spectra and numerical ranges for tuples}\label{numersection}

The next proposition will be instrumental in approximating numerical ranges by spectra, and in relating spectra to orthogonality relations.
\begin{proposition}\label{basic}
Let $\mathcal T=(T_1,\dots,T_n)\in B(H)^n$, $k\in\NN\cup\{0\}$, $r>0$, and
$$\{\xi=(\xi_1,\dots,\xi_n):\|\xi\|_{\infty} \le r\}\subset W_e(\mathcal T).$$
Suppose $M\subset H$ is  a subspace of a finite codimension, and $x\in M$ satisfies
$$
\|x\|^2=1-2^{-k} \qquad \text{and} \qquad |\langle T_jx,x\rangle|\le {r}{2^{-k-1}}, \qquad j=1,\dots,n.
$$
Then there exists $x'\in M$ such that
$$
\|x'\|^2=1-2^{-k-1}, \qquad \|x'-x\|^2={2^{-k-1}} \quad  \text{and} \quad |\langle T_jx',x'\rangle|\le
{r}{2^{-k-2}}
$$
for all $j=1,\dots,n$.

Consequently, there exists $w\in M$ such that
$$
\|w\|=1, \qquad \|w-x\|\le 3\cdot 2^{-\frac{k}{2}-1} \qquad \text{and} \qquad \langle T_jw,w\rangle=0
$$
for all $j=1,\dots,n$.
\end{proposition}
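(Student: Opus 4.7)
The plan for the first claim is to write $x' = x + z$ with a carefully chosen small increment $z$ living in the subspace
$$M' := M \cap \{x,\, T_1 x,\dots, T_n x,\, T_1^* x,\dots, T_n^* x\}^{\perp},$$
which still has finite codimension in $H$. For any such $z$ the cross terms in
$$\langle T_j(x+z), x+z\rangle = \langle T_j x,x\rangle + \langle T_j z,z\rangle + \langle T_j x,z\rangle + \langle T_j z,x\rangle$$
vanish, since $\langle T_j x,z\rangle = \overline{\langle z, T_j x\rangle} = 0$ and $\langle T_j z,x\rangle = \langle z, T_j^* x\rangle = 0$. Moreover $z \perp x$ gives $\|x+z\|^2 = \|x\|^2 + \|z\|^2$. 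Thus the task reduces to choosing $z \in M'$ with $\|z\|^2 = 2^{-k-1}$ and $\langle T_j z,z\rangle$ close to $-\langle T_j x,x\rangle$ for every $j$.

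To produce such a $z$, I parametrize it as $z = 2^{-(k+1)/2} y$ for a unit vector $y \in M'$ and set $\mu_j := -2^{k+1} \langle T_j x, x\rangle$. The bound on $\langle T_j x,x\rangle$ gives $|\mu_j| \le r$, so $\mu = (\mu_1,\dots,\mu_n)$ belongs to $W_e(\mathcal T)$ by the standing hypothesis. Applying the definition of the essential joint numerical range to $M'$ with $\delta := r/2$ produces a unit $y \in M'$ with $\|\langle \mathcal T y, y\rangle - \mu\| < r/2$. All three required properties for $x' := x + z$ follow at once: Pythagoras yields $\|x'\|^2 = 1 - 2^{-k-1}$ and $\|x'-x\|^2 = 2^{-k-1}$, while the cross-term cancellation combined with the definition of $\mu$ gives
$$|\langle T_j x', x'\rangle| = 2^{-k-1} |\langle T_j y,y\rangle - \mu_j| < r \cdot 2^{-k-2}.$$

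For the ``consequently'' statement I iterate, setting $x_k := x$ and using the first part inductively to produce $x_{k+i+1}$ from $x_{k+i}$ inside $M$, but with one extra precaution: at each step I also require the new increment $z_{k+i} := x_{k+i+1} - x_{k+i}$ to be orthogonal to the \emph{original} $x$ (this only adds one direction to the relevant finite-codimensional subspace). Since $\|z_{k+i}\| = 2^{-(k+i+1)/2}$, the sequence $(x_{k+i})$ is Cauchy; call its limit $w \in M$. The bound $|\langle T_j x_{k+i}, x_{k+i}\rangle| \le r \cdot 2^{-k-i-1}$ then forces $\langle T_j w, w\rangle = 0$, and $\|w\|^2 = \lim \|x_{k+i}\|^2 = 1$.

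The main obstacle is obtaining the sharp estimate $\|w - x\| \le 3\cdot 2^{-k/2 - 1}$: a naive triangle inequality on $\sum_i \|z_{k+i}\|$ only yields the weaker $(1+\sqrt 2)\,2^{-k/2}$. The extra orthogonality safeguard is precisely what fixes this. Since every $z_{k+i}$ is orthogonal to $x$, induction shows $x_{k+i} - x \perp x$ for all $i$, hence $w - x \perp x$, and Pythagoras gives
$$\|w - x\|^2 = \|w\|^2 - \|x\|^2 = 1 - (1 - 2^{-k}) = 2^{-k},$$
so $\|w - x\| = 2^{-k/2} \le 3 \cdot 2^{-k/2-1}$, as required.
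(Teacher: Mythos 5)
Your proof of the first part is essentially identical to the paper's: the authors also take a unit vector $u$ in $M$ orthogonal to the finite-dimensional span of $x$, $T_jx$, $T_j^*x$, chosen via the definition of $W_e(\mathcal T)$ so that $\langle \mathcal T u,u\rangle$ approximates $-2^{k+1}\langle\mathcal T x,x\rangle$ within $r/2$, and set $x'=x+2^{-(k+1)/2}u$. That part is correct and needs no comment.

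For the ``consequently'' part you depart from the paper in one respect, and the departure is a genuine improvement. The paper iterates the first part verbatim and estimates $\|w-x\|\le\sum_{m\ge k}\|x_{m+1}-x_m\|=2^{-k/2}(\sqrt2-1)^{-1}=(\sqrt2+1)\,2^{-k/2}$, and then asserts this is $<3\cdot2^{-k/2-1}$ --- which is numerically false, since $\sqrt2+1\approx 2.414>3/2$. You correctly diagnosed that the naive triangle inequality cannot deliver the stated constant, and your fix --- forcing every increment to lie in $x^{\perp}$ as well, which costs only one extra dimension of codimension at each step --- makes $w-x\perp x$ and gives the exact value $\|w-x\|^2=\|w\|^2-\|x\|^2=2^{-k}$, comfortably below the stated bound. (For what it is worth, the later application in Theorem \ref{circlegeneral} only uses $\|x-w\|\le 3\cdot2^{-c/2}$, so the paper's weaker, correctly derived constant $(\sqrt2+1)2^{-c/2}$ would also have sufficed there; but as a proof of the proposition as stated, your version is the one that actually works.) One small presentational point: in the iteration you are not literally invoking the first part as a black box but rerunning its proof with the enlarged orthogonality constraint; it would be cleanest to state the first part with the extra constraint ``$x'-x\perp v$ for any prescribed finite set of vectors $v$'' built in, since the argument gives it for free.
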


\begin{proof}
Let $\xi=\langle\mathcal T x,x\rangle,$ so that $\|\xi\|_{\infty} \le \frac{r}{2^{k+1}}$ and $-2^{k+1}\xi\in W_e(\mathcal T)$.
Note that
$$
L:= \bigvee \{x, T_jx, T^{*}_jx: j=1,\dots,n \}
$$
is a finite-dimensional subspace of $H$.
Thus, by assumption, there exists a unit vector $u\in M\cap L^{\perp}$ such that
$$
\|\langle T_ju,u\rangle+\xi 2^{k+1}\|_{\infty}<\frac{r}{2}, \qquad j=1,\dots,n.
$$
Set $x'=x+ 2^{-\frac{k+1}{2}}u$. Then
\begin{align*}
\|x' \|^2&=\|x\|^2+\frac{1}{2^{k+1}}=1-2^{-k-1},\\
 \|x'-x\|^2&=2^{-k-1},
\end{align*}
 and
 \begin{align*}
|\langle T_jx',x'\rangle|=
\bigl|\langle T_jx, x\rangle +\frac{1}{2^{k+1}}\langle T_ju,u\rangle\bigr|=
\bigl|\xi_j+\frac{1}{2^{k+1}}\langle T_ju,u\rangle\bigr|
\le
\frac{r}{2^{k+2}}
\end{align*}
for all $1 \le j \le n.$ This finishes the proof of the first part of the proposition.

To prove its second part, we construct $w$ as the limit of an appropriate sequence $(x_m), m \ge k$.
To construct the sequence,
set $x_k=x\in M$. We have
$$\|x_k\|^2=1-2^{-k} \qquad \text{and} \qquad \|\langle \mathcal Tx_k,x_k\rangle\|_{\infty} \le \frac{r}{2^{k+1}}.$$
If we put $x_{k+1}=x',$ then by the first part of the proposition,
$$\|x_{k+1}\|^2=1-\frac{1}{2^{k+1}}, \qquad \|x_{k+1}-x_k\|^2=\frac{1}{2^{k+1}} \quad \text{and}\quad
\|\langle\mathcal T x_{k+1},x_{k+1} \rangle\|_{\infty}\le\frac{r}{2^{k+2}}.
$$
Thus, repeating the procedure above, we construct inductively vectors $x_{m} \in M, m \ge k,$ such that
$$\|x_m\|^2=1-\frac{1}{2^m}, \qquad \|x_{m+1}-x_m\|^2=\frac{1}{2^{m+1}} \quad \text{and}\quad
\|\langle\mathcal T x_m,x_m\rangle\|_{\infty}\le\frac{r}{2^{m+1}}.
$$
Clearly the sequence $(x_m)$ is Cauchy. Let $w$ be its limit. By construction,
$$
w\in M, \qquad \|w\|=1, \qquad \langle T_jw,w\rangle=0,
$$
for all $1 \le j \le n,$ and
$$
\|w-x\|\le \sum_{m=k}^\infty\|x_{m+1}-x_m\| =2^{-k/2}\frac{1}{\sqrt{2}-1}< 3\cdot 2^{-\frac{k}{2}-1}.
$$

\end{proof}

Proposition \ref{basic} implies in particular that points from the interior of $W_e (T_1, \dots, T_n)$
belong to $W(T_1, \dots, T_n)$ and, moreover, can be attained on any subspace of $H$ of finite codimension.

\begin{corollary}\label{essencenumrange}
Let $\mathcal T=(T_1,\dots,T_n)\in B(H)^n.$  Then
$$\Int \, W_e(\mathcal T) \subset W(\mathcal T).$$
Moreover, if $\la=(\la_1, \dots, \la_n)\in\Int \, W_e(\mathcal T)$
then for every subspace $M\subset H$ of a finite codimension there exists $x\in M$ such that $\|x\|=1$ and
$$
 \bigl(\langle T_1 x, x\rangle,\dots,\langle T_nx,x\rangle\bigr)=\la.
$$
\end{corollary}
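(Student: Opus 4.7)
The plan is to reduce the statement directly to Proposition \ref{basic} by a translation.

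First I would observe that the essential numerical range behaves additively under scalar shifts, in the sense that $W_e(\mathcal T - \lambda) = W_e(\mathcal T) - \lambda$ for any $\lambda \in \CC^n$. This is immediate from the ``orthonormal sequence'' definition of $W_e$ given in the Preliminaries: $\langle(T_j - \lambda_j) x_k, x_k\rangle \to \mu_j - \lambda_j$ if and only if $\langle T_j x_k, x_k\rangle \to \mu_j$. Consequently, if $\lambda$ lies in the interior of $W_e(\mathcal T)$, then $0$ lies in the interior of $W_e(\mathcal T - \lambda)$, so there exists $r > 0$ with
\[
\{\e = (\e_1, \dots, \e_n) : \|\e\| \le r\} \subset W_e(\mathcal T - \lambda).
\]

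Next I would apply the second part of Proposition \ref{basic} to the tuple $\mathcal T - \lambda$, with $k = 0$ and $x = 0 \in M$. The starting hypotheses are trivially verified: indeed, $\|x\|^2 = 0 = 1 - 2^0$ and $|\langle (T_j - \lambda_j) x, x\rangle| = 0 \le r/2$. The proposition then yields $w \in M$ with $\|w\| = 1$ and
\[
\langle (T_j - \lambda_j) w, w\rangle = 0, \qquad j = 1, \dots, n,
\]
which, upon rearrangement, gives $\langle T_j w, w\rangle = \lambda_j$ for all $j$. This establishes the ``moreover'' clause, and the inclusion $\Int \, W_e(\mathcal T) \subset W(\mathcal T)$ follows by taking $M = H$.

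There is essentially no obstacle here once one notices that Proposition \ref{basic} permits the trivial starting vector $x = 0$ at level $k = 0$; the whole content is already packed into the inductive construction carried out there. The only minor point to check is the translation invariance of $W_e$, which is a one-line consequence of the sequential definition.
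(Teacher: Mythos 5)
Your argument is correct and is exactly the paper's own proof: translate so that $\la=(0,\dots,0)$ and apply Proposition \ref{basic} with $k=0$ and $x=0$. The translation invariance of $W_e$ and the verification of the starting hypotheses are both as you state, so nothing further is needed.
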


\begin{proof}
Without loss of generality we may assume that $\la=(0,\dots,0)$ (by considering the $n$-tuple $(T_1-\la_1,\dots,T_n-\la_n)$ instead of $\mathcal T$).

Let $k=0$ and $x=0$. Then Proposition \ref{basic} yields the statement.
\end{proof}
Another interesting consequence of Proposition \ref{basic}
allows one to find a \emph{joint} diagonal compression for $T_1, \dots, T_n$
to an infinite-dimensional subspace of $H.$ This can be considered as non-commutative generalization of the technique (and statements)
employed in  e.g. \cite{Anderson}, \cite{Bourin} and other papers dealing with compressions with help
of essential numerical ranges. The statement below was proved in \cite[p.440]{Anderson} for $n=1.$
(For $T \in B(H)$ the problem of characterizing $\lambda \in \mathbb C$ such that $PTP=\lambda P$ for an infinite rank projection $P$ was posed in \cite[p.190]{Fillmore}.)
\begin{corollary}
Let $\mathcal T=(T_1,\dots,T_n)\in B(H)^n$ and $\la=(\la_1,\dots,\la_n) \in\Int \, W_e(\mathcal T)$. Then there exists an infinite-dimensional subspace $L$ of $H$ such that
$$
P_LT_jP_L=\la_jP_L, \qquad j=1,\dots,n,
$$
where $P_L$ is the orthogonal projection on $L.$
\end{corollary}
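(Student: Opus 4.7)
The plan is to reduce the conclusion to a statement about matrix entries along an orthonormal basis of $L$, and then bootstrap Corollary \ref{essencenumrange} inductively to produce that basis. The identity $P_LT_jP_L=\la_jP_L$ is equivalent to the system
$$
\langle T_je_k,e_l\rangle=\la_j\de_{kl}, \qquad j=1,\dots,n,
$$
whenever $\{e_k\}$ is an orthonormal basis of $L$. So it suffices to construct an infinite orthonormal sequence $(e_k)_{k\ge 1}$ in $H$ satisfying $\langle T_je_k,e_k\rangle=\la_j$ for every $j$ and $\langle T_je_k,e_l\rangle=0$ for every $j$ whenever $k\ne l$.

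I would build $(e_k)$ by induction. Suppose $e_1,\dots,e_m$ have already been constructed with the desired relations. Let
$$
L_m=\bigvee\bigl\{e_l,\,T_je_l,\,T_j^*e_l:1\le l\le m,\,1\le j\le n\bigr\},
$$
which is finite-dimensional, and set $M:=L_m^{\perp}$, a subspace of finite codimension in $H$. Since $\la\in\Int W_e(\mathcal T)$, Corollary \ref{essencenumrange} provides a unit vector $e_{m+1}\in M$ with
$$
\bigl(\langle T_1e_{m+1},e_{m+1}\rangle,\dots,\langle T_ne_{m+1},e_{m+1}\rangle\bigr)=\la.
$$
The containment $e_{m+1}\in M=L_m^{\perp}$ automatically gives $e_{m+1}\perp e_l$, $e_{m+1}\perp T_je_l$ and $e_{m+1}\perp T_j^*e_l$ for all $l\le m$ and all $j$, so
$$
\langle T_je_{m+1},e_l\rangle=\langle e_{m+1},T_j^*e_l\rangle=0, \qquad \langle T_je_l,e_{m+1}\rangle=0,
$$
which are precisely the mixed-index orthogonality relations for the enlarged family $e_1,\dots,e_{m+1}$. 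This completes the inductive step; starting from the empty family we obtain an infinite orthonormal sequence $(e_k)_{k\ge 1}$ enjoying all the required relations.

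Finally, let $L=\overline{\bigvee_k e_k}$, an infinite-dimensional subspace. For any $x\in H$ we have $P_Lx=\sum_kc_ke_k$ with $c_k=\langle x,e_k\rangle$, and then
$$
P_LT_jP_Lx=\sum_kc_kP_LT_je_k=\sum_kc_k\sum_l\langle T_je_k,e_l\rangle e_l=\sum_kc_k\la_je_k=\la_jP_Lx,
$$
so $P_LT_jP_L=\la_jP_L$ for every $j$, as required. There is no real obstacle here beyond organising the bookkeeping: the only non-trivial input, namely that we may realise any interior point of $W_e(\mathcal T)$ as a joint numerical-range value on an \emph{arbitrary} subspace of finite codimension, is already supplied by Corollary \ref{essencenumrange}, and the inductive enlargement of the ``forbidden'' subspace $L_m$ to include the images $T_je_l$ and $T_j^*e_l$ is exactly what turns the pointwise condition into the full matrix identity on $L$.
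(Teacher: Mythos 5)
Your proposal is correct and follows essentially the same route as the paper: an inductive application of Corollary \ref{essencenumrange} on the orthogonal complement of the finite-dimensional span of the previously chosen vectors together with their images under the $T_j$ and $T_j^*$, followed by reading off $P_LT_jP_L=\la_jP_L$ from the resulting matrix relations. No issues.
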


\begin{proof}
 Using Corollary \ref{essencenumrange}, find a unit vector $x_1\in H$ such that $\langle\mathcal T x_1,x_1\rangle=\la$.
Construct inductively a sequence $(x_k)\subset H$ of unit vectors such that
$$
x_{k+1}\perp\{x_{m}, T_jx_m,T_j^{*}x_m: 1\le m\le k, 1\le j\le n\}
$$
and
$$
\langle\mathcal T x_k,x_k\rangle=\la
$$
for all $k\in\NN$ using the fact that
  $$\bigvee \{x_{m}, T_jx_m, T_j^{*}x_m: 1\le m\le k, 1\le j\le n\}$$
is a subspace of finite dimension.
Let $L=\bigvee_{k=1}^\infty x_k$. Clearly $L$ is an infinite-dimensional subspace with an orthonormal basis $(x_k)$.
Let $y\in L.$ Then, in view of our construction of $(x_k),$ it is easy to see that
$$
\langle \mathcal T y,y\rangle =\la \|y\|^2.
$$
Hence $P_L T_j P_L=\la_j P_L$ for all $1 \le j \le n.$
\end{proof}

The next result allows one also to describe a ``large'' subset of $W(\mathcal T)$
in purely spectral terms.

\begin{corollary}\label{convex}
Let $\mathcal T=(T_1,\dots,T_n)\in B(H)^n$ be a $n$-tuple of commuting
operators. Then $$\Int\,\conv\, \sigma_e(\mathcal T)\subset W(\mathcal T).$$
\end{corollary}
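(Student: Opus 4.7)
The plan is as follows. Corollary \ref{essencenumrange} already gives $\Int W_e(\mathcal T) \subset W(\mathcal T)$, so it suffices to establish $\conv \sigma_e(\mathcal T) \subset W_e(\mathcal T)$ and then pass to interiors. By the preliminary discussion, $\sigma_{\pi e}(\mathcal T) \subset W_e(\mathcal T)$ and $W_e(\mathcal T)$ is convex; hence $\conv \sigma_{\pi e}(\mathcal T) \subset W_e(\mathcal T)$. The entire argument therefore reduces to the set-theoretic identity
$$\conv \sigma_e(\mathcal T) = \conv \sigma_{\pi e}(\mathcal T),$$
which is precisely the non-trivial content promised in the preliminaries.

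To obtain this identity I would invoke the fact from \cite[Corollary III.19.16]{Muller}, recalled above, that the polynomial convex hulls $\hat \sigma_e(\mathcal T)$ and $\hat \sigma_{\pi e}(\mathcal T)$ coincide, and combine it with the general complex-analytic identity $\conv K = \conv \hat K$ valid for every compact $K \subset \CC^n$. Granting that identity, applying it to $K = \sigma_e(\mathcal T)$ and to $K = \sigma_{\pi e}(\mathcal T)$ yields
$$\conv \sigma_e(\mathcal T) = \conv \hat \sigma_e(\mathcal T) = \conv \hat \sigma_{\pi e}(\mathcal T) = \conv \sigma_{\pi e}(\mathcal T),$$
as required.

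The main remaining task is to verify $\conv K = \conv \hat K$. The inclusion $\conv K \subset \conv \hat K$ is immediate from $K \subset \hat K$, so it is enough to check $\hat K \subset \conv K$. Given $z \in \CC^n \setminus \conv K$, I would separate $z$ from the compact convex set $\conv K$ by an $\RR$-affine functional $f$ with $f(z) > 0 > \sup_K f$. Since every $\RR$-linear functional on $\CC^n$ is the real part of some $\CC$-linear functional, we may write $f = \Re \ell + c$ for a $\CC$-linear $\ell$ and $c \in \RR$. Then $w \mapsto e^{\ell(w)+c}$ is an entire function whose modulus equals $e^{f(w)}$, so it exceeds $1$ at $z$ while remaining strictly below $1$ on $K$. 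Uniform polynomial approximation on the compact set $K \cup \{z\}$ produces a polynomial witness that $z \notin \hat K$.

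Putting everything together, $\Int \conv \sigma_e(\mathcal T) \subset \Int W_e(\mathcal T) \subset W(\mathcal T)$, which is the claim. The one substantive step is the separation-plus-exponentiation argument giving $\hat K \subset \conv K$; the rest is routine bookkeeping on inclusions already assembled earlier in the paper. I expect that in practice the authors package this complex-analytic fact slightly differently (for instance, citing it directly from a standard reference on several complex variables or polynomially convex hulls), but the content is the same.
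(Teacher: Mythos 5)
Your proposal is correct, and the overall reduction is the same as the paper's: both arguments boil down to $\conv\sigma_e(\mathcal T)=\conv\sigma_{\pi e}(\mathcal T)$, combined with the convexity of $W_e(\mathcal T)$, the inclusion $\sigma_{\pi e}(\mathcal T)\subset W_e(\mathcal T)$, and Corollary \ref{essencenumrange}. Where you differ is in how that key equality of convex hulls is justified. The paper argues via the boundary inclusion $\partial\sigma_e(T)\subset\sigma_{\pi e}(T)$ and the observation that $\sigma_e(T)$ is obtained from $\sigma_{\pi e}(T)$ by filling some of its ``holes'', which does not change the convex hull; as written, this is a planar (single-operator) argument, and the paper itself notes in the preliminaries that the boundary inclusion can fail for $n\ge 2$. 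You instead invoke the coincidence $\hat\sigma_e(\mathcal T)=\hat\sigma_{\pi e}(\mathcal T)$ (which the paper cites from M\"uller's book for general tuples) together with the general identity $\conv K=\conv\hat K$ for compact $K\subset\CC^n$, which you correctly establish by separating a point outside $\conv K$ with a real-affine functional, writing it as $\Re\ell+c$ for a $\CC$-linear $\ell$, and approximating $e^{\ell+c}$ uniformly by polynomials to witness non-membership in $\hat K$. This is a standard fact ($\hat K$ is always contained in the convex hull of $K$), and your route has the advantage of working verbatim in $\CC^n$ for all $n$, so it arguably patches a point where the paper's written argument is only literally valid for $n=1$. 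Both approaches are sound; yours trades a topological argument in the plane for a separation-plus-polynomial-approximation argument that scales to tuples.
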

\begin{proof}
Since the polynomial convex hulls of $\si_e(\mathcal T)$ and $\si_{\pi e}(\mathcal T)$ coincide (see \cite[Proposition III.19.15]{Muller}),
we have $\conv\si_e(\mathcal T)=\conv\si_{\pi e}(\mathcal T)$.
Then, taking into account that $\si_{\pi e}(\mathcal T)\subset W_e(\mathcal T)$, we infer that
$$
\Int\conv\si_e(\mathcal T)=
\Int\conv\si_{\pi e}(\mathcal T)
\subset\Int W_e(\mathcal T)\subset W(\mathcal T).
$$
\end{proof}

To clarify further the interplay between joint spectra and numerical ranges, we will need the next statement on interpolation of points
 from the polynomial hull of a compact $K \subset \mathbb C$  by the convex hull of ``powers'' of $K.$

\begin{proposition}\label{4.5}
Let $K\subset\CC$ be a compact set and $0\in\Int\widehat K$. Let $n\in\NN$. Then
there exists $r>0$ such that the following is true.
For every $\xi=(\xi_1,\dots,\xi_n)\in\CC^n$ with $\|\xi\|_{\infty}\le r,$ there are $m\in\NN$, $\la_1,\dots,\la_m\in K$ and $c_1,\dots,c_m\ge 0$ satisfying  $\sum_{j=1}^mc_j=1$ and
$$
\sum_{j=1}^mc_j\la_j^k=\xi_k, \qquad k=1,\dots,n.
$$
\end{proposition}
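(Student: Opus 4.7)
The proposition combines the two preceding lemmas: Lemma \ref{corol} produces \emph{exact} representations of small tuples using points on a circle $\TT_\rho$, while Lemma \ref{Riesz} \emph{approximates} powers of a single point of $\Int\,\hat K$ by convex combinations of powers of points in $K$. The plan is to feed the output of Lemma \ref{corol} into Lemma \ref{Riesz}.

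First, I would choose the radius $\rho$ of the auxiliary circle. Since $0\in\Int\,\hat K$, there exists $\rho>0$ with the closed disk $\overline{\{z\in\CC:|z|\le\rho\}}$ contained in $\Int\,\hat K$; in particular $\TT_\rho\subset \Int\,\hat K$. Apply Lemma \ref{corol} to this $\rho$ and to $n$, obtaining a number $r>0$ such that for every $\e=(\e_1,\dots,\e_n)\in\CC^n$ with $\|\e\|\le r$ there are $s\in\NN$, $\mu_1,\dots,\mu_s\in\TT_\rho$ and $a_1,\dots,a_s\ge 0$ satisfying $\sum_{j=1}^s a_j=1$ and
$$
\sum_{j=1}^s a_j\mu_j^k=\e_k,\qquad k=1,\dots,n.
$$
This $r$ will be the $r$ of the proposition.

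Next, given such a representation, I would approximate each $\mu_j^k$ by powers of points of $K$. Since $\mu_j\in\TT_\rho\subset\Int\,\hat K$, Lemma \ref{Riesz} (applied with $u=\mu_j$ and the given $\de>0$) yields $m_j\in\NN$, points $\la_{j,1},\dots,\la_{j,m_j}\in K$ and weights $c_{j,1},\dots,c_{j,m_j}\ge 0$ with $\sum_{i=1}^{m_j}c_{j,i}=1$ and
$$
\Bigl|\sum_{i=1}^{m_j}c_{j,i}\la_{j,i}^k-\mu_j^k\Bigr|<\de,\qquad k=1,\dots,n.
$$

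Finally I would splice these together. Consider the doubly-indexed convex combination with weights $a_j c_{j,i}$ assigned to the points $\la_{j,i}\in K$: the total weight is $\sum_{j=1}^s a_j\sum_{i=1}^{m_j}c_{j,i}=\sum_{j=1}^s a_j=1$, and by the triangle inequality, for every $k=1,\dots,n$,
$$
\Bigl|\sum_{j=1}^s\sum_{i=1}^{m_j}a_j c_{j,i}\la_{j,i}^k-\e_k\Bigr|
=\Bigl|\sum_{j=1}^s a_j\Bigl(\sum_{i=1}^{m_j}c_{j,i}\la_{j,i}^k-\mu_j^k\Bigr)\Bigr|
\le\sum_{j=1}^s a_j\cdot\de=\de.
$$
Re-indexing the pairs $(j,i)$ as $1,\dots,m:=\sum_{j=1}^s m_j$ produces the required $\la_1,\dots,\la_m\in K$ and $c_1,\dots,c_m\ge 0$ with $\sum c_l=1$ and $|\sum c_l\la_l^k-\e_k|\le\de$ (strict inequality can be arranged by applying Lemma \ref{Riesz} with $\de/2$ in place of $\de$). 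There is no serious obstacle once one realizes the two lemmas are set up precisely to be composed; the only delicate point is the initial choice of $\rho$ small enough to force $\TT_\rho\subset\Int\,\hat K$, which uses the hypothesis $0\in\Int\,\hat K$ in an essential way.
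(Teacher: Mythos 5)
Your proposal is correct and follows essentially the same route as the paper: fix $\rho$ with $\TT_\rho\subset\Int\hat K$, take $r$ from Lemma \ref{corol}, approximate each circle point via Lemma \ref{Riesz}, and combine with product weights. (The strict inequality already follows from the paper's estimate, so the $\de/2$ adjustment is unnecessary, but harmless.)
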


\begin{proof}
Note that if  $q$ is a nonzero polynomial such that $q(0)=0,$ then
$$
\sup\{\Re q(\lambda):\lambda\in K\} >0
$$
by the maximum principle for the harmonic function $\Re q$.

By a compactness argument, there exists a $\de>0$ satisfying
$$
 \sup\{\Re p(\lambda):\lambda\in K\} >\de
$$
for every polynomial  $p(\lambda)=\sum_{j=1}^n\alpha_j\lambda^j$ with $\sum_{j=1}^n|\alpha_j|=1$.
Hence there exists $r>0$ such that
$$
\Re\sum_{j=1}^n \al_j\xi_j<\sup\{\Re p(\lambda):\lambda\in K\}
$$
for every $\xi=(\xi_1,\dots,\xi_n)\in\CC^n$ with $\|\xi\|\le r$ and every polynomial $p$ as above.

 Let $A=\hbox{conv} \,\{(\lambda, \dots, \lambda^n): \lambda\in K\}$. Then $A$ is compact as the convex hull of a compact set in $\CC^n.$  .
Let now $\xi\in\CC^n$ satisfy $\|\xi\|_{\infty}\le r$.
If $\xi\notin A$, then by the Hahn-Banach theorem there exists a functional $\varphi$ in the dual space of $(\CC^n), \|\cdot\|_\infty)$
with $\|\varphi\|=1$ such that
$$
\Re\varphi(\xi)>\sup\{\Re\varphi((\lambda, \dots, \lambda^n)):\lambda\in K\}.
$$
So there exist  $\{\al_1,\dots,\al_n \} \subset \mathbb C, \sum_{j=1}^n|\alpha_j|=1,$ such that
$$
\Re\sum_{j=1}^n\al_j\xi_j>\sup\Bigl\{\Re\sum_{j=1}^n\al_j\lambda^j:\lambda\in K\Bigr\},
$$
which is a contradiction with our choice of $r.$ Thus $\xi\in A$, and the proof is complete.
\end{proof}

Now we are ready to prove the statement which will  also be basic for constructions of orbits
with orthogonality properties, and will complement Proposition \ref{basic}.
\begin{theorem}\label{lambdawe}
Let $T\in B(H)$ and let $\la\in\Int\, \widehat\sigma(T)$.  Then
$$
(\la,\la^2,\dots,\la^n)\in\Int \, W_e(T,T^2,\dots,T^n).
$$
for all $n\in\NN.$
\end{theorem}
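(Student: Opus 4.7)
The plan is to exhibit $(\la, \la^2, \dots, \la^n)$ as an interior point of the convex hull
$$
V := \conv\bigl\{(\mu, \mu^2, \dots, \mu^n): \mu \in \si_{\pi e}(T)\bigr\},
$$
which sits inside $W_e(T, T^2, \dots, T^n)$ thanks to the identification $\si_{\pi e}(T,T^2,\dots,T^n)=\{(\mu,\mu^2,\dots,\mu^n):\mu\in\si_{\pi e}(T)\}$, the convexity of $W_e$, and the inclusion $\si_{\pi e}(\mathcal T) \subset W_e(\mathcal T)$ for commuting tuples, all recalled in Section \ref{prelim}. A preliminary reduction I need is $\Int\hat\si(T) \subset \Int\hat\si_{\pi e}(T)$. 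Since by the preliminaries $\hat\si_e(T) = \hat\si_{\pi e}(T)$ and $\si(T)\setminus\hat\si_e(T)$ consists of isolated points of $\si(T)$, a short topological inspection---each such isolated point either lies on $\partial\hat\si(T)$ or else merges with the bounded hole of $\si(T)$ around it to form a single bounded component of $\CC \setminus \si_{\pi e}(T)$---yields $\Int\hat\si(T) \subset \hat\si_{\pi e}(T)$, and openness of the left-hand side upgrades this to the desired inclusion of interiors.

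Next I introduce an affine change of variables on $\CC^n$: the binomial transformation
$$
\phi(u_1,\dots,u_n):=\Bigl(\sum_{i=0}^k\binom{k}{i}(-\la)^{k-i}u_i\Bigr)_{k=1}^n,\qquad u_0:=1,
$$
is an affine isomorphism of $\CC^n$ (its linear part is lower triangular with $1$'s on the diagonal) which sends $(\mu, \mu^2, \dots, \mu^n)$ to $(\mu - \la, (\mu-\la)^2, \dots, (\mu - \la)^n)$ for every $\mu \in \CC$, and in particular $\phi(\la, \la^2, \dots, \la^n) = 0$. Setting $K' := \si_{\pi e}(T) - \la$ and $V' := \conv\{(\nu, \nu^2, \dots, \nu^n) : \nu \in K'\} = \phi(V)$, the problem is reduced to showing $0 \in \Int V'$.

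By the preliminary reduction, $0 \in \Int\hat{K'}$, so Proposition \ref{4.5} provides, for every $\de > 0$, some $r > 0$ such that every $\eta \in \CC^n$ with $\|\eta\| \le r$ lies within $\de$ of an element of $V'$; that is, $V'$ is $\de$-dense in the closed ball $\{\eta \in \CC^n : \|\eta\| \le r\}$ for every $\de > 0$. As $V'$ is compact (the convex hull of a compact subset of $\CC^n$), this forces $\{\eta : \|\eta\| \le r\} \subset V'$, so $0 \in \Int V'$ as required. The main technical point I expect to have to verify carefully is the opening reduction $\Int\hat\si(T) \subset \Int\hat\si_{\pi e}(T)$; once that is in hand, the remainder is a clean marriage of the convexity of $W_e$ with the moment approximation provided by Proposition \ref{4.5}.
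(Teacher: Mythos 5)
Your proof is correct, but it takes a genuinely different route from the paper's. Both arguments reduce to $\la=0$ by the same affine (binomial) change of variables, both rest on the identification $\Int\hat\sigma(T)=\Int\hat\sigma_{\pi e}(T)$ (which the paper simply cites from M\"uller's book rather than re-deriving topologically), and both feed $K=\sigma_{\pi e}(T)-\la$ into Proposition \ref{4.5}. The divergence is in how the resulting convex combinations $\sum_jc_j(\la_j,\dots,\la_j^n)$ are placed inside $W_e$: you invoke the cited convexity of $W_e(\mathcal T)$ together with $\sigma_{\pi e}(T,\dots,T^n)\subset W_e(T,\dots,T^n)$, so that the compact convex set $V'$ sits in $W_e$ and your density-plus-compactness argument gives a ball around $0$; the paper instead verifies membership of each point of the ball by hand, choosing in an arbitrary finite-codimensional subspace mutually orthogonal almost-eigenvectors $x_i$ with $Tx_i\approx\la_ix_i$ and $x_i\perp T^jx_k$, and checking that $x=\sum_ic_i^{1/2}x_i$ realizes the prescribed moments. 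Your version is shorter and more conceptual but leans on the nontrivial convexity theorem for joint essential numerical ranges; the paper's is self-contained and rehearses exactly the orthogonal-almost-eigenvector construction reused later in Theorem \ref{circlegeneral}. One point you should make explicit: your final compactness step needs the radius $r$ of Proposition \ref{4.5} to be independent of $\de$ (the stated quantifier order $\forall\de\,\exists r$ would not suffice); this is indeed how $r$ is produced in the proof of that proposition (it comes from Lemma \ref{corol} and depends only on $\rho$ and $n$), and the paper's own proof silently uses the same $\de$-free $r$.
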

\begin{proof}

Assume first that $\la=0$.
Then
$$0\in \Int\, \widehat\sigma(T)=\Int \, \widehat\sigma_e(T)=\Int\, \widehat \sigma_{\pi e}(T)$$
(see Section \ref{prelim} or \cite[Corollary III. 19.16 and Theorem III.19.18]{Muller}).

We apply Proposition \ref{4.5} to the compact set $K=\sigma_{\pi e}(T)$.
Let $r>0$ be given by Proposition \ref{4.5}.
Let $\xi_1,\dots,\xi_n\in\CC$, $\max_{1 \le j \le n} |\xi_j|\le r$ and $\de>0$. Let $M\subset H$ be a subspace of a finite codimension.
We show that there exists a unit vector $x\in M$ such that
$$|\langle T^jx,x\rangle-\xi_j|<\de, \qquad j=1,\dots,n.$$

By Proposition \ref{4.5}, there exist $m\in\NN$, $\la_1,\dots,\la_m\in \sigma_{\pi e}(T)$ and numbers $c_i\ge 0$ with $\sum_{i=1}^mc_i=1$ such that
$$
\sum_{i=1}^mc_i\la_i^j= \xi_j, \qquad j=1,\dots,n.
$$
Let $$0<\de'<\frac{\de}{n\cdot \max\{1, \|T\|^n\}}.$$ Since $\la_i\in\sigma_{\pi e}(T), 1 \le i \le m,$ we can find inductively
unit vectors $x_i\in M$ such that
\begin{align*}
x_i\perp x_k,& \qquad i\ne k, \, 1\le i,k\le m,\\
x_i\perp T^j x_k,& \qquad  j=1,\dots,n, \, i\ne k, \, 1\le i,k\le m,\\
\|Tx_i-\la_ix_i\|\le \de',& \qquad 1 \le i \le m.
\end{align*}
Note that for every $n \in \mathbb N$ and all $j$ such that $1\le j\le n$ we then have
$$\|T^jx_i-\la_i^jx_i\|\le \de' n \max\{1,\|T\|^n\}<\de.$$
Set $x=\sum_{i=1}^m c_i^{1/2}x_i$. Then $x \in M$ and $\|x\|^2=\sum_{i=1}^mc_i=1$. If $1\le j\le n$ then
\begin{align*}
|\langle T^jx,x\rangle-\xi_j|=&
\Bigl|\sum_{i=1}^m c_i\langle T^jx_i,x_i\rangle-\xi_j\Bigr|\\
\le&
\sum_{i=1}^m c_i \|T^jx_i-\la_i^jx_i\| +
\Bigl|\sum_{i=1}^m c_i\langle \la_i^j x_i,x_i\rangle-\xi_j\Bigr|
\le \de.
\end{align*}
Since $\de>0$ and $M\subset H$, $\codim M<\infty$ were arbitrary, we have
$(0,\dots,0)\in\Int \, W_e(T,T^2,\dots,T^n)$.

Let now $\lambda\in\Int\widehat\sigma(T)$ be arbitrary, and
set $S=T-\lambda$.
Thus, $0 \in \Int\, \widehat\sigma(S),$ and then, as we have proved above,
$$(0,\dots,0)\in\Int\, W_e(S, S^2,\dots,S^n).$$
Observe that for each $n,$
$$(T, T^2, \dots, T^n)=\bigl(S+\lambda, S^2 +2\lambda S +\lambda^2, \dots, \sum_{j=0}^n {n\choose
j}S^j\lambda^{n-j}\bigr).$$

Let the mapping $G:(\CC^n, \|\cdot\|_{\infty}) \to (\CC^n \|\cdot\|_{\infty})$ be defined by
$$
G(z_1,\dots,z_n)= (z_1+\lambda, z_2+2\lambda z_1+\lambda^2, \dots ,
\sum_{j=1}^n {n\choose j} z_j\lambda^{n-j} + \lambda^n ).
$$
Note that the mapping $$(z_1,\dots,z_n)\mapsto
G(z_1,\dots,z_n)-(\lambda,\lambda^2,\dots,\lambda^n)$$
is linear and invertible (since it is determined by an upper triangular
matrix with non-zero diagonal). So $G$ maps any neighbourhood of
$(0,\dots,0)$ onto a neighbourhood of
$(\lambda,\lambda^2,\dots,\lambda^n)$.
Using the definition of $W_e(T,T^2,\dots,T^n),$ it is easy to see that
$$W_e(T,T^2,\dots,T^n)=\bigl \{G(z_1,\dots, z_n): (z_1,\dots,z_n)\in
W_e(S,S^2,\dots,S^n) \bigr \}.$$
(Note that a similar relation holds also for $W(T, \dots, T^n)$).
Hence, we infer that  $$(\la,\la^2,\dots,\la^n)\in\Int \, W_e(T, \dots, T^n),$$ and the theorem follows.
\end{proof}
The following corollary is an immediate consequence
of Theorem  \ref{lambdawe} and Corollary \ref{essencenumrange}.
\begin{corollary}\label{lambdaw}
Let $T\in B(H)$ and let $\la\in\Int\, \widehat\sigma(T).$ Then
$$
(\la,\la^2,\dots,\la^n)\in W(T,T^2,\dots,T^n)
$$
for all $n\in\NN.$
Moreover, for each $n \in \mathbb N$
there exists an infinite-dimensional subspace $L\subset H$ such that
$$
P_LT^jP_L=\la^jP_L, \qquad j=1,\dots,n,
$$
where $P_L$ is the orthogonal projection on $L.$
\end{corollary}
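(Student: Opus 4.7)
The plan is to combine the two ingredients already established: Theorem \ref{lambdawe} transfers the spectral hypothesis on $\lambda$ into a numerical-range statement, and then Corollary \ref{essencenumrange} (together with its companion compression corollary) converts points in the interior of the essential joint numerical range into actual attained values, even on subspaces of finite codimension or on infinite-dimensional subspaces via diagonal compressions.

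More precisely, I would proceed as follows. Fix $n \in \NN$ and $\lambda \in \Int\, \hat\sigma(T)$. By Theorem \ref{lambdawe} applied to the tuple $(T,T^2,\dots,T^n)$, we have
$$
(\lambda,\lambda^2,\dots,\lambda^n)\in \Int\, W_e(T,T^2,\dots,T^n).
$$
The first assertion then follows directly from Corollary \ref{essencenumrange}, which guarantees the inclusion $\Int\, W_e(\mathcal T)\subset W(\mathcal T)$ for any tuple $\mathcal T \in B(H)^n$; applied to $\mathcal T=(T,T^2,\dots,T^n)$ this yields $(\lambda,\lambda^2,\dots,\lambda^n)\in W(T,T^2,\dots,T^n)$, so there exists a unit vector $x\in H$ with $\langle T^j x,x\rangle=\lambda^j$ for $j=1,\dots,n$.

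For the second assertion I would invoke the unnamed corollary immediately following Corollary \ref{essencenumrange} (the ``joint diagonal compression'' corollary), which asserts that whenever $\la=(\la_1,\dots,\la_n)\in \Int\, W_e(\mathcal T)$, there exists an infinite-dimensional subspace $L\subset H$ with $P_L T_j P_L=\la_j P_L$ for $j=1,\dots,n$. Applied with $\mathcal T=(T,T^2,\dots,T^n)$ and $(\la_1,\dots,\la_n)=(\lambda,\lambda^2,\dots,\lambda^n)$, which lies in $\Int\, W_e(\mathcal T)$ by Theorem \ref{lambdawe}, this produces the desired subspace $L$ with $P_L T^j P_L = \lambda^j P_L$ for $j=1,\dots,n$.

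There is no real obstacle here: all the difficult analytic content (the approximation lemmas \ref{Riesz}, \ref{corol}, Proposition \ref{4.5}, the inductive construction in Proposition \ref{basic}, and the passage from $\Int\, \hat\sigma(T)$ to $\Int\, W_e(T,\dots,T^n)$ via Theorem \ref{lambdawe}) has already been discharged in the preceding results. The corollary is literally a concatenation of Theorem \ref{lambdawe} with Corollary \ref{essencenumrange} and its diagonal-compression variant, and so the proof amounts to a one-line invocation of each.
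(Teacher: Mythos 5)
Your proposal is correct and matches the paper's own argument, which simply declares the corollary an immediate consequence of Theorem \ref{lambdawe} combined with Corollary \ref{essencenumrange} (the unnamed diagonal-compression corollary you cite for the second assertion being itself a direct consequence of Corollary \ref{essencenumrange}). Nothing further is needed.
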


\section{Circles in the spectrum and orthogonality}\label{circles}

In this section we  characterize operators having the unit circle in their spectra
by means of orthogonality (and ``almost orthogonality'') properties
of their orbits. We start with the proof of Theorem \ref{main1} stated in the introduction.
Our arguments will be based on numerical ranges considerations from the previous section.

\noindent
{\it Proof of Theorem \ref{main1}.} \,\,
The implication (iii)$\Rightarrow$(ii) is obvious.
\smallskip

(ii)$\Rightarrow$(i):
Let $\lambda \in \mathbb C$ be such that $|\la|=1$.
For $k\in\NN$ let $\e=k^{-3}$ and $n=(k+1)^2$. Let $x_k$ be a vector $x$ satisfying (ii)
for such $\epsilon$ and $n$.

Set $$y_{k,0}=x_k+\la^{-1}Tx_k+\cdots+ \la^{-k+1}T^{k-1}x_k.$$
Then
$$
\|(T-\la)y_{k,0}\|=
\bigl\|-\la x_k+\la^{-k+1}T^kx_k\bigr\|\le 4
$$
and
\begin{align*}
\|y_{k,0}\|^2&=
\sum_{j,m=0}^{k-1}\langle \la^{-j}T^jx_k,\la^{-m}T^mx_k\rangle\\
&\ge
\sum_{j=0}^{k-1}\|T^jx_k\|^2-\sum_{0\le j,m\le k-1\atop j\ne m}\bigl|\langle \la^{-j}T^jx_k,\la^{-m}T^mx_k\rangle\bigr|
\\
&\ge
\frac{k}{4}-k^2\e
=\frac{k}{4}-\frac{1}{k}.
\end{align*}
Let $u_{k,0}=\frac{y_{k,0}}{\|y_{k,0}\|}$. Then $\|u_{k,0}\|=1$ and $\lim_{k\to\infty}\|(T-\la)u_{k,0}\|=0$. Hence $\la\in\si_{\pi}(T)$.

Suppose on the contrary that $\la\notin\si_{\pi e}(T)$. Then, by \cite[Theorem III.16.8]{Muller}, the operator $T-\la$ is upper semi-Fredholm,
that is  $\dim N(T-\la)<\infty$ and $T \upharpoonright_{N(T-\la)^\perp}$ is bounded below.
 Let $P$ be the orthogonal projection onto $N(T-\la)$.
Let $x_k, y_{k,0}$ and $u_{k,0}$ be as above.
Then $(T-\la)u_{k,0}\to 0, k \to \infty$.
Since $(T-\la)P=0$, we also have $$(T-\la)(I-P)u_{k,0}\to 0, \quad k \to \infty,$$ and so
$(I-P)u_{k,0}\to 0, k \to \infty$. Since the unit ball in $N(T-\la)$ is compact, we can assume (by passing to a subsequence if necessary) that $Pu_{k,0}\to v_0, k \to \infty,$ and $v_0 \in N(T-\la)$. Hence
$$u_{k,0}\to v_0, \quad k \to \infty, \qquad \text{and} \qquad \|v_0\|=1.$$

For $j=1,\dots,k$ set
$y_{k,j}=T^{kj}y_{k,0}$ and note that as above $\|y_{k,j}\|\ge \frac{k}{4}-\frac{1}{k}.$
Let $$u_{k,j}=\frac{y_{k,j}}{\|y_{k,j}\|}, \qquad 1 \le j \le k.$$
In the same way as for $u_{k,0}$ one can show that $$\lim_{k\to\infty}(T-\la)u_{k,j}=0$$ for all $j \in \mathbb N.$
Moreover one can assume that $$\lim_{k\to\infty}u_{k,j}=v_j\in N(T-\la),$$ where $\|v_j\|=1$
for all $j$.
If  $j\ne m$ then
$$
|\langle u_{k,j},u_{k,m}\rangle|\le
\sum_{s,s'=0}^{k-1}|\langle T^{s+kj}x_k,T^{s'+km}x_k\rangle|\le k^2\e=k^{-1}.
$$
So $\langle v_j,v_m\rangle=0$ for all $j,m\in\NN, j\ne m$. Hence
$\dim N(T-\lambda)=\infty$, a contradiction.
\smallskip

(i)$\Rightarrow$(iii):
Let $\e>0$ and $n\in\NN$ be fixed. Note that $\|T\|\ge 1$.

Using the assumption that $\TT\subset\sigma_{\pi e}(T),$ find inductively unit vectors $u_0,u_1,\dots,u_{n-1}$ such that
$$
\langle T^ju_k,T^{j'}u_{k'}\rangle=0,\qquad 0\le k,k'\le n-1, k\ne k', 0\le j,j'\le n-1,
$$
and
$$
\|Tu_k-e^{2\pi i k/n}u_k\|<\frac{\e}{4n^{3/2}\|T\|^{2n}}, \qquad 0\le k\le n-1.
$$
For $1\le j\le n-1$ and $0\le k\le n-1$ we have
\begin{align*}
&\|T^j u_k-e^{2\pi i kj/n}u_k\|\\
&\le
\bigl\|T^{j-1}+T^{j-2}e^{2\pi ik/n}+\cdots+e^{2\pi ik(j-1)
/n}\bigr\|\cdot\|Tu_k-e^{2\pi i k/n}u_k\|\\
&\le
\frac{\e}{4n^{1/2}\|T\|^n}.
\end{align*}
Set $$v:=\frac{1}{\sqrt{n}}\sum_{k=0}^{n-1}u_k.$$ Then $\|v\|=1$.
If $0\le j\le n$ then
$$
\Bigl\|T^jv-\frac{1}{\sqrt{n}}\sum_{k=0}^{n-1}e^{2\pi ikj/n}u_k\Bigr\|\le
\frac{1}{\sqrt{n}}\sum_{k=0}^{n-1}\bigl\|T^j u_k-e^{2\pi i kj/n}u_k\bigr\|
\le
\frac{\e}{4\|T\|^n}\le\e/4.
$$
So for $0\le j,m\le n-1, j\ne m$ it follows that
\begin{align*}
&|\langle T^jv,T^mv\rangle|\le
\Bigl\|T^jv-\frac{1}{\sqrt{n}}\sum_{k=0}^{n-1}e^{2\pi ijk/n}u_k\Bigr\|\cdot\|T^mv\|\\
+&\Bigl\|\frac{1}{\sqrt{n}}\sum_{k=0}^{n-1} e^{2\pi i jk/n} u_k\Bigr\|\cdot
\Bigl\|T^mv-\frac{1}{\sqrt{n}}\sum_{k=0}^{n-1}e^{2\pi imk/n}u_k\Bigr\|+
\frac{1}{n}\Bigl|\sum_{k=0}^{n-1} e^{2\pi i (j-m)k/n}\Bigr|\\
\le&
\|T^m\|\cdot\frac{\e}{4\|T\|^n}+\frac{\e}{4\|T\|^n}
\le \e/2.
\end{align*}
Similarly,
$$
\|T^nv-v\|=\Bigl\|T^nv-\frac{1}{\sqrt{n}}\sum_{k=0}^{n-1}e^{2\pi i k}u_k\Bigr\|<
\frac{\e}{4\|T\|^n}\le\e/4.
$$
Finally,
$$\Bigl\|\frac{1}{\sqrt{n}}\sum_{k=0}^{n-1}e^{2\pi ijk/n}u_k\Bigr\|=1, \qquad  j=0,1,\dots,n-1,$$
 and so for every $j$ such that $1 \le j \le n-1,$
$$
1-\e/4\le \|T^jv\|\le 1+\e/4,.
$$

Choose now $c\in\NN$ such that $$32\cdot 2^{-c/2}\|T\|^n<\e,$$
 and let $w=(1-2^{-c})^{1/2}v$. Then
$$\|w\|^2=1-2^{-c}\qquad \text{and}\quad |\langle T^jw,w\rangle|\le |\langle T^jv,v\rangle|<\e/2$$
for each $1 \le j \le n-1.$
Since $\TT\subset\sigma_{\pi e}(T)$, we have
$$0 \in  \Int\, \widehat \sigma_e(T).$$
Hence, by Theorem \ref{lambdawe}, it follows that
$$(0, \dots, 0) \in {\rm Int}\, W_e(T, \dots, T^{n-1}).$$
Then Proposition \ref{basic} implies that there exists a unit vector $x\in H$ such that
$$ \langle T^jx,x\rangle=0, \qquad j=1,\dots,n-1, \quad \text{and} \quad \|x-w\|= 3\cdot 2^{-c/2}.
$$
So
$$
\|x-v\|\le\|x-w\|+\|w-v\|\le
3\cdot 2^{-c/2}+(1-\sqrt{1-2^{-c}})\le
4\cdot 2^{-c/2}<\frac{\e}{8\|T\|^n}.
$$
If $0\le j\le n-1$ then
$$
\|T^jx\|\le \|T^jv\|+\|T^jx-T^jv\|\le
1+\frac{\e}{4}+\frac{\e}{8}<1+\e,
$$
and similarly,
$$
\|T^jx\|\ge \|T^jv\|-\|T^j x-T^jv\|\ge 1-\frac{\e}{4}-\frac{\e}{8}>1-\e.
$$
Moreover,
$$
\|T^nx-x\|\le
\|T^nx-T^nv\|+\|T^nv-v\|+\|v-x\|\le
(\|T\|^n+1)\|x-v\|+\e/4<\e.
$$
Finally, for $1 \le j,m\le n-1, j\ne m,$
\begin{align*}
|\langle T^jx,T^mx\rangle|&\le
\|T^jx-T^jv\|\cdot\|T^mx\|+ \|T^jv\|\cdot\|T^mx-T^mv\|+|\langle T^jv,T^mv\rangle|
\\
&\le
4\|T\|^n\cdot\|x-v\|+\e/2
<\e.
\end{align*}
Hence $x$ satisfies all conditions of (iii).
\hfill $\Box$

\begin{remark}
Note that the argument given in the beginning of the proof of (i)$\Rightarrow$(iii) easily yields that the $n$-tuple $(0, \dots, 0)$ belongs to
$W_e(T, \dots, T^n).$ The reason for invoking Theorem \ref{lambdawe} is that one needs
to show that $(0, \dots, 0)$ belongs to the interior of $W_e(T, \dots, T^n)$ in order to be able
to apply Proposition \ref{basic}.
\end{remark}

The following result shows that under mild assumptions one can replace essential spectrum by spectrum in Theorem \ref{main1}.

\begin{theorem}\label{circlepb}
Let $T\in B(H)$ satisfy $r(T)\le 1$. The following statements are equivalent.
\begin{itemize}
\item [(i)] $\TT\subset\sigma(T).$

\item[(ii)] for every $\e>0$ and every $n\in\NN$ there exists a unit vector $x\in H$ such that
$$
|\langle T^mx,T^jx\rangle|<\e, \qquad 0\le m,j\le n-1, m\ne j,
$$
and
$$
\|T^nx-x\|<\e.
$$

\item [(iii)] for every $\e>0$ and every $n\in\NN$ there exists a unit vector $x\in H$ such that
\begin{align*}
x\perp T^{j}x,& \qquad 1 \le j \le n-1,\\
|\langle T^mx,T^jx\rangle|< \e,& \qquad 1\le m,j\le n-1, m\ne j,\\
1-\e<\|T^jx\|< 1+\e,& \qquad 1\le j\le n-1,
\end{align*}
and
$$
\|T^nx-x\|<\e.
$$
\end{itemize}
\end{theorem}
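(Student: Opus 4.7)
The plan is to prove the equivalence via the cycle (iii) $\Rightarrow$ (ii) $\Rightarrow$ (i) $\Rightarrow$ (iii), with only the middle implication requiring genuinely new work.

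The implication (iii) $\Rightarrow$ (ii) is immediate: any unit vector $x$ satisfying (iii) automatically satisfies (ii), as (iii) strengthens the almost-orthogonality by adding the exact relation $x\perp T^j x$ and the uniform norm control, while $\|T^nx - x\|<\e$ appears in both.

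For (ii) $\Rightarrow$ (i), I would fix $\lambda\in\TT$ and, for each large $n$, pick an $n$-th root of unity $\lambda_n$ with $|\lambda_n-\lambda|\le 2\pi/n$. By (ii) with parameters $n$ and $\e = n^{-3}$, pick a unit vector $x_n$. Form the ``trigonometric polynomial'' vector
$$y_n=\sum_{j=0}^{n-1}\lambda_n^{-j}T^j x_n.$$
A direct telescoping using $\lambda_n^n=1$ gives $(T-\lambda_n)y_n=\lambda_n(T^nx_n-x_n)$, hence $\|(T-\lambda_n)y_n\|\le n^{-3}$. Expanding $\|y_n\|^2$, the diagonal contribution is at least $\|x_n\|^2=1$, while the $n(n-1)$ off-diagonal terms are each bounded in modulus by $n^{-3}$, giving $\|y_n\|^2\ge 1-n^{-1}$. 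Normalizing $u_n=y_n/\|y_n\|$ yields
$$\|(T-\lambda)u_n\|\le\|(T-\lambda_n)u_n\|+|\lambda_n-\lambda|\le \sqrt 2\, n^{-3}+2\pi/n\to 0,$$
so $\lambda\in\si_\pi(T)\subset\si(T)$.

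For (i) $\Rightarrow$ (iii), the spectral radius condition $r(T)\le 1$ forces $\TT\subset\partial\si(T)$; combined with $\TT\subset\si(T)$, the inclusion \eqref{inclusion} established in the preliminaries yields $\TT\subset\si_{\pi e}(T)$. This is condition (i) of Theorem \ref{circlegeneral}, and the implication (i) $\Rightarrow$ (iii) of that theorem produces a unit vector $x$ with the properties required by (iii) here (the norm bound at $j=0$ is automatic since $\|x\|=1$).

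The main obstacle is (ii) $\Rightarrow$ (i): unlike in Theorem \ref{circlegeneral}, condition (ii) here provides no direct lower bound on $\|T^j x\|$, so the reduction to the earlier theorem is not available. The compensating ingredient is the cycle-closing condition $\|T^nx-x\|<\e$, which glues the iterates into an approximate eigenvector for the Cesaro-type sum $y_n$ precisely at $n$-th roots of unity; density of roots of unity in $\TT$ then delivers arbitrary $\lambda\in\TT$ by a perturbation argument after normalization.
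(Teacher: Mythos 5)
Your proposal is correct and follows essentially the same route as the paper: (iii)$\Rightarrow$(ii) trivially, (i)$\Rightarrow$(iii) via \eqref{inclusion} and Theorem \ref{circlegeneral}, and (ii)$\Rightarrow$(i) by forming the weighted orbit sum $y_n=\sum_{j}\la^{-j}T^jx$, bounding $\|(T-\la)y_n\|$ through the cycle-closing condition and $\|y_n\|^2$ from below via almost-orthogonality. The only (harmless) difference is that you approximate $\la\in\TT$ by an $n$-th root of unity $\la_n$ so that $\la_n^n=1$ exactly, whereas the paper keeps $\la$ itself and instead chooses $n$ with $|\la^n-1|<k^{-1}$; both devices close the telescoping sum equally well.
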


\begin{proof}
The implication (iii)$\Rightarrow$(ii) is obvious.
\smallskip

(i)$\Rightarrow$(iii):
Since $\TT\subset\si(T)$ and $r(T)\le 1$,  we infer by \eqref{inclusion} that $\TT\subset\si_{\pi e}(T)$. So (iii) follows from Theorem \ref{main1}.

\smallskip

(ii)$\Rightarrow$(i):
Let $\lambda \in\mathbb C$ be such that $|\la|=1.$ For $k\in\NN$ fix any $n\ge k$ such that $|\la^n-1|<k^{-1},$ and  let $\e=n^{-3}$.
Let $x_k$ be the vector $x$ satisfying (ii) for $n$ and $\epsilon$ as above.

Set
$$y_k=x_k+\la^{-1}Tx_k+\cdots+ \la^{-n+1}T^{n-1}x_k.$$
Then
\begin{align*}
\|(T-\la)y_k\|=&\|-\lambda x_k+ \lambda^{-n+1} T^nx_k\|\\
=&\bigl\|-x_k+\lambda^{-n}T^n x_k \bigr \|\\
\le& \|-x_k+\lambda^{-n}x_k\| + \bigl\|-\lambda^{-n}x_k+\lambda^{-n} T^nx_k \bigr \|
\le
2k^{-1},
\end{align*}
and
\begin{align*}
\|y_k\|^2&=
\sum_{j,j'=0}^{n-1}\langle \la^{-j}T^jx_k, \la^{-j'}T^{j'}x_k\rangle\\
&=
\sum_{j=0}^{n-1}\|T^jx_k\|^2+ \sum_{0\le j,j'\le n-1\atop j\ne j'}\langle \la^{-j}T^jx_k, \la^{-j'}T^{j'}x_k\rangle\\
&\ge\|x_k\|^2-n^2\e
\ge 1-1/k.
\end{align*}
Hence $\la\in\sigma_\pi(T)\subset\si(T)$.
\end{proof}

Now we turn to the case of unitary $T.$ The next corollary of Theorem \ref{circlepb} is a strengthening of Arveson's theorem
from \cite{Arveson}
formulated in the introduction.
\begin{theorem}\label{circleunitary}
Let $T$ be a unitary operator on $H$. The following statements are equivalent.
\begin{itemize}
\item [(i)] $\sigma(T)=\TT.$

\item [(ii)] for every $\e>0$ and every $n\in\NN$  there exists a unit vector $x\in H$ such that
\begin{align*}
x\perp T^{j}x,& \qquad 1 \le j \le n-1,\\
|\langle T^mx,T^jx\rangle|<\e,& \qquad 1\le m,j\le n-1, i\ne j,
\end{align*}
and
$$
\|T^nx-x\|<\e.
$$
\item [(iii)] For every $n\in\NN$ there exists a unit vector $x\in H$ such that the vectors $x, Tx, T^2,\dots, T^{n}x$ are mutually orthogonal.
\item[(iv)] For every $n\in\NN$,
$$
(0,\dots,0)\in \overline{W(T,T^2,\dots,T^{n})}.
$$
\end{itemize}
\end{theorem}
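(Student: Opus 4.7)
The plan is to establish the cycle (iii) $\Rightarrow$ (iv) $\Rightarrow$ (i) $\Rightarrow$ (iii), supplemented by (i) $\Rightarrow$ (ii) $\Rightarrow$ (iv). The implication (iii) $\Rightarrow$ (iv) is immediate, since mutual orthogonality of $x, Tx, \ldots, T^n x$ forces $\langle T^j x, x\rangle = 0$ for $j=1,\ldots,n$, so $(0,\ldots,0) \in W(T,T^2,\ldots,T^n)$. For (i) $\Rightarrow$ (ii), I would invoke Theorem \ref{circlepb}, noting that $r(T)=1$ for unitary $T$; the norm conditions there become trivial since $\|T^j x\| = \|x\|$. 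Then (ii) $\Rightarrow$ (iv) follows by applying (ii) with $n+1$ in place of $n$, which yields a unit vector $x$ satisfying $x \perp T^j x$ for all $j=1,\ldots,n$ exactly, hence $(0,\ldots,0)\in W(T,T^2,\ldots,T^n)$.

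For (i) $\Rightarrow$ (iii) I would use the numerical-range machinery of Section \ref{numersection}. Since $\sigma(T) = \TT$, its polynomial convex hull $\hat\sigma(T)$ equals the closed unit disk, so $0 \in \Int\, \hat\sigma(T)$. Theorem \ref{lambdawe} then yields $(0,0,\ldots,0) \in \Int\, W_e(T,T^2,\ldots,T^n)$, and Corollary \ref{essencenumrange} pushes this inside the ordinary joint numerical range, giving $(0,\ldots,0) \in W(T,T^2,\ldots,T^n)$. Thus there exists a unit vector $x$ with $\langle T^j x, x\rangle = 0$ for $1 \le j \le n$. Unitarity of $T$ upgrades this to $\langle T^j x, T^m x\rangle = \langle T^{j-m}x, x\rangle = 0$ for all $0 \le m \ne j \le n$, which is precisely condition (iii).

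The main obstacle is (iv) $\Rightarrow$ (i), which I would handle via spectral measures. Since $T$ is unitary, $\sigma(T) \subset \TT$, so only the reverse inclusion needs proof. From (iv) I would extract, for each $n$, a unit vector $x_n \in H$ with $|\langle T^j x_n, x_n\rangle| < 1/n$ for $j=1,\ldots,n$. Let $\mu_n$ denote the scalar spectral measure of $x_n$; it is a probability measure on $\sigma(T) \subset \TT$ whose Fourier coefficients satisfy $\hat\mu_n(j) = \langle T^j x_n, x_n\rangle$. Since $\mu_n$ is positive, $\hat\mu_n(-j) = \overline{\hat\mu_n(j)}$, so $|\hat\mu_n(j)| < 1/n$ for all $|j| \le n$. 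By weak-$*$ compactness of probability measures on $\TT$, a subsequence of $(\mu_n)$ converges weakly to a probability measure $\mu$. The support inclusion $\mathrm{supp}\, \mu_n \subset \sigma(T)$ passes to the weak limit because $\sigma(T)$ is closed, and continuity of Fourier coefficients under weak convergence forces $\hat\mu(j) = 0$ for every $j \ne 0$, so $\mu$ must be the normalized Lebesgue measure on $\TT$. As this limiting measure has full support, $\sigma(T) \supset \TT$, completing the proof.
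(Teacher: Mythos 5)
Your proposal is correct, and the overall logical structure (a cycle through (iii), (iv), (i) plus the detour (i)$\Rightarrow$(ii)$\Rightarrow$(iv)) covers all four equivalences. The difference from the paper lies mainly in (iv)$\Rightarrow$(i): the paper simply cites the implication (ii)$\Rightarrow$(i) of Theorem \ref{circlegeneral}, i.e.\ the construction of approximate eigenvectors $y_{k,0}=x_k+\la^{-1}Tx_k+\cdots+\la^{-k+1}T^{k-1}x_k$ for each $\la\in\TT$ (which applies here because unitarity converts $|\langle T^jx,x\rangle|<\e$ into the full set of conditions of that theorem's (ii)); this route yields the stronger conclusion $\TT\subset\si_{\pi e}(T)$ and works for arbitrary bounded operators. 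Your argument instead invokes the spectral theorem: you pass to scalar spectral measures $\mu_n$ of the vectors $x_n$, use positivity to get $\hat\mu_n(-j)=\overline{\hat\mu_n(j)}$, extract a weak-$*$ limit $\mu$ supported in $\sigma(T)$ with all nonzero Fourier coefficients vanishing, and identify $\mu$ with normalized Lebesgue measure, whence $\TT=\mathrm{supp}\,\mu\subset\sigma(T)$. This is clean and self-contained but intrinsically tied to normality, whereas the paper's machinery is designed to avoid the spectral theorem. A second, minor divergence: you obtain (iii) directly from (i) via Theorem \ref{lambdawe} and Corollary \ref{essencenumrange} (noting $0\in\Int\hat\sigma(T)$ and upgrading $\langle T^jx,x\rangle=0$ to mutual orthogonality by unitarity), while the paper derives (iii) from (ii); both are valid, and indeed your route is essentially the content of the implication (i)$\Rightarrow$(iii) inside Theorem \ref{circlegeneral}. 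Your step (ii)$\Rightarrow$(iv), applying (ii) with $n+1$ in place of $n$ to get exact orthogonality $x\perp T^jx$ for $1\le j\le n$, is also correct.
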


\begin{proof}
The equivalence (i)$\Leftrightarrow$(ii) was proved in the previous theorem.
\smallskip

The implication (ii)$\Rightarrow$(iii) follows from the fact that $T$ is unitary and (iii)$\Rightarrow$(iv) is obvious.
\smallskip

(iv)$\Rightarrow$(i): See Theorem \ref{main1}, implication (ii)$\Rightarrow$(i).
\end{proof}

We finish this section with another  operator  version of the Rokhlin Lemma.
Note that we do not assume that $T$ below is unitary.

\begin{theorem}\label{connes}
Let $T\in B(H)$. The following statements are equivalent.

(i) $\TT\subset\si_{\pi e}(T)$;

(ii) for all $\e>0$, $n> \max (4\|T\|^2\e^{-2},1)$ and $u\in H$, $\|u\|=1$ there exist orthonormal vectors
$w_0,\dots,w_{n-1}\in H$ such that
$$
\|Tw_j-w_{j+1}\|<\e \qquad 0\le j\le n-2, \qquad
\|Tw_{n-1}-w_0\|<\e,
$$
and
$$
\frac{1}{\sqrt n}\sum_{j=0}^{n-1}w_j=u.
$$
\end{theorem}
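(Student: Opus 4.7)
The plan is to derive both implications from results already established. The easy direction (ii)$\Rightarrow$(i) will be reduced to Theorem \ref{circlegeneral} by using $w_0$ itself as the candidate vector there, while the hard direction (i)$\Rightarrow$(ii) will be established by an explicit construction, namely an inverse discrete Fourier transform applied to a system consisting of the prescribed $u$ together with approximate eigenvectors of $T$ at the non-trivial $n$-th roots of unity.

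For (ii)$\Rightarrow$(i), given $\e' > 0$ and $n' \in \NN$ I intend to produce an $x\in H$ satisfying condition (ii) of Theorem \ref{circlegeneral} for these parameters. Fix any unit vector $u$, pick a very small $\e > 0$ (to be specified in terms of $n'$, $\e'$, $\|T\|$), take $n > \max(n', 4\|T\|^2\e^{-2})$, and apply the present hypothesis to obtain $w_0, \dots, w_{n-1}$. A standard induction on $\|Tw_j - w_{j+1}\| < \e$ yields
$$
\|T^j w_0 - w_j\| \le j\|T\|^{j-1}\e, \qquad 0 \le j \le n-1,
$$
so for $\e$ small enough the orthonormality of $w_0, \dots, w_{n'-1}$ transfers to $|\langle T^m w_0, T^j w_0\rangle| < \e'$ for $0\le m, j\le n'-1$, $m \ne j$, and $\tfrac12 \le \|T^j w_0\| \le 2$. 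Setting $x := w_0$ and invoking Theorem \ref{circlegeneral} gives $\TT \subset \sigma_{\pi e}(T)$.

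For (i)$\Rightarrow$(ii), given $\e > 0$, $n > 4\|T\|^2\e^{-2}$ and a unit vector $u$, set $u_0 := u$ and $\la_k := e^{2\pi ik/n}$ for $k = 0, 1, \dots, n-1$. Using $\TT \subset \sigma_{\pi e}(T)$ and a small $\de > 0$ (to be fixed at the end), inductively select unit vectors $u_k$, $k = 1, \dots, n-1$, with $u_k \perp \{u_0, \dots, u_{k-1}\}$ and $\|T u_k - \la_k u_k\| < \de$; this is possible because $\{u_0, \dots, u_{k-1}\}^\perp$ has finite codimension. The announced vectors are defined by the inverse DFT
$$
w_j := \frac{1}{\sqrt n}\sum_{k=0}^{n-1} \la_k^j u_k, \qquad j = 0, 1, \dots, n-1.
$$
Unitarity of the matrix $[\la_k^j/\sqrt n]_{j,k}$ turns orthonormality of $(u_k)$ into orthonormality of $(w_j)$, and since $\sum_{j=0}^{n-1}\la_k^j$ equals $n$ for $k=0$ and vanishes otherwise, one obtains $\tfrac{1}{\sqrt n}\sum_{j=0}^{n-1} w_j = u_0 = u$. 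The identity $\la_k^n = 1$ makes the family cyclic ($w_n = w_0$), and a direct computation gives
$$
T w_j - w_{j+1} = \frac{1}{\sqrt n}\Bigl[(Tu - u) + \sum_{k=1}^{n-1} \la_k^j (Tu_k - \la_k u_k)\Bigr],
$$
whence $\|Tw_j - w_{j+1}\| \le (\|Tu - u\| + (n-1)\de)/\sqrt n$ for all $0 \le j \le n-1$. Since $\TT \subset \sigma(T)$ forces $\|T\| \ge 1$, one has $\|Tu - u\| \le 2\|T\|$; the hypothesis on $n$ gives $2\|T\|/\sqrt n < \e$, and choosing $\de$ small enough yields the strict bound $\|Tw_j - w_{j+1}\| < \e$, as required.

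The decisive point (and the source of the numerical bound $n > 4\|T\|^2\e^{-2}$) is that the arbitrary vector $u$ carries no spectral information a priori, yet it is forced to play the role of a ``fake'' approximate eigenvector at $\la_0 = 1$, contributing the unavoidable term $\|Tu-u\|/\sqrt n \le 2\|T\|/\sqrt n$ to each of the $n$ shift relations. The $1/\sqrt n$ damping stems from averaging over the $n$ Fourier modes, and the given lower bound on $n$ is precisely what is needed to absorb this inescapable error within $\e$.
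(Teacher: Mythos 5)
Your proof is correct. The implication (i)$\Rightarrow$(ii) is essentially the paper's own argument: the same inverse discrete Fourier transform applied to $u_0=u$ together with approximate eigenvectors of $T$ at the nontrivial $n$-th roots of unity, chosen orthonormal and orthogonal to $u$, with the same accounting of the unavoidable error $\|Tu-u\|/\sqrt{n}\le 2\|T\|/\sqrt{n}<\e$ that explains the threshold $n>4\|T\|^2\e^{-2}$. For (ii)$\Rightarrow$(i), however, you take a genuinely different and shorter route. The paper does not reduce to Theorem \ref{circlegeneral}; instead it forms the twisted block sums $y_{k,m}=w_{mk}+\la^{-1}w_{mk+1}+\cdots+\la^{-k+1}w_{mk+k-1}$, shows that their normalizations are mutually orthogonal approximate eigenvectors of $T$ at each $\la\in\TT$, and then reruns the upper semi-Fredholm contradiction (infinitely many orthonormal limit vectors in the finite-dimensional $N(T-\la)$). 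You instead observe that $w_0$ alone, via the induction $\|T^jw_0-w_j\|\le j\max(1,\|T\|)^{j-1}\e$, already satisfies the almost-orthogonality and norm conditions of statement (ii) of Theorem \ref{circlegeneral}, and then quote that theorem as a black box. This is legitimate (Theorem \ref{circlegeneral} is proved earlier in the same section) and avoids duplicating the Fredholm argument, at the price of using the full strength of that theorem rather than a self-contained Weyl-sequence computation. Two cosmetic points: in this direction you cannot yet assert $\|T\|\ge 1$, so the constant in your induction should be $\max(1,\|T\|)^{j-1}$ (or note that $\|T\|\ge\|w_1\|-\e=1-\e$ follows from the hypothesis); and you should record that choosing $\e$ after $n'$, $\e'$ and $\|T\|$ also guarantees $\tfrac12\le\|T^jw_0\|\le 2$ for $j\le n'-1$. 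Neither affects the validity of the argument.
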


\begin{proof}
(ii)$\Rightarrow$(i): Let $\lambda \in \mathbb C$ be such that $|\la|=1$, and let $u\in H$, $\|u\|=1$ be arbitrary.
 For $k\in\NN$ choose any $n> \max (4\|T\|^2\e^{-2}, k^2).$
Suppose that  $w_0,\dots,w_{n-1}$ satisfy (ii) with $\e=k^{-1},$  and
set $$y_{k,0}=w_0+\la^{-1}w_1+\cdots+\la^{-k+1}w_{k-1}.$$
 Then $\|y_{k,0}\|=\sqrt{k}$ and
\begin{align*}
&\|(\la-T)y_{k,0}\|\\
=&
\bigl\|\la w_0+  (w_1-Tw_0)+ \cdots + \la^{-k+2}(w_{k-1}-Tw_{k-2})+\la^{-k+1}Tw_{k-1}\bigr\|\\
\le& \|w_0\|+(k-1)\e+\|Tw_{k-1}\|\\
\le&
1+(k-1)\e+\|T w_{k-1}- w_k\|+\|w_k\|\\
\le& 2+ k\e
=3,
\end{align*}
so that
$$
\lim_{k\to\infty}\Bigl\|(T-\la)\frac{y_{k,0}}{\|y_{k,0}\|}\Bigr\|=0.
$$

Consider then vectors $y_{k,1},\dots,y_{k,k}, k \in \mathbb N,$ where $$y_{k,m}=w_{mk}+\la^{-1}w_{mk+1}+\cdots+\la^{-k+1}w_{mk+k-1}, \qquad 1 \le m \le k.$$
Analogously to the above one can show that
$$\lim_{k\to\infty}\Bigl\|(T-\la)\frac{y_{k,m}}{\|y_{k,m}\|}\Bigr\|=0,$$
for all $m$ such that $1 \le m \le k$.
Moreover, $y_{k,m}\perp y_{k,m'}$ for all $k, m$ and $m'$ such that $0\le m,m'\le k, m\ne m'$.
Thus, arguing as in the proof of Theorem \ref{main1}, (ii)$\Rightarrow$(i),  one assumes that  $\la \not \in\si_{\pi e}(T),$
and arrives at a contradiction.

\smallskip

(i)$\Rightarrow$(ii):
Let $\e>0$ and $n> \max (4\|T\|^2\e^{-2},1).$ Fix $u\in H$ with $\|u\|=1$
and set $\la=e^{2\pi i/n}$. Choose
$$\e' \in \Bigl(0, \frac{1}{\sqrt{n}}\Bigl(\e-\frac{2\|T\|}{\sqrt{n}}\Bigr) \Bigr),$$
and let $u_0=u.$ Using  the fact that $\si_{\pi e}(T)\supset\TT,$  choose inductively orthonormal vectors $u_1,\dots,u_{n-1}$
such that $u_k\perp u$ and $\|(T-\la^k)u_k\|<\e'$ for $k=1,\dots,n-1$.

For $j=0,\dots,n-1$ set
$$
w_j=\frac{1}{\sqrt{n}}\sum_{k=0}^{n-1}\la^{jk}u_k.
$$

Then the vectors $w_0,\dots,w_{n-1}$ are orthonormal and
$$\frac{1}{\sqrt{n}}\sum_{j=0}^{n-1}w_j=u.$$

For $0\le j\le n-2$ we have
\begin{align*}
\|Tw_j-w_{j+1}\|&\le
\frac{1}{\sqrt{n}}\Bigl(\|Tu_0-u_0\|+\sum_{k=1}^{n-1}
\|\la^{jk} (Tu_k-\la^k u_k)\|\Bigr)\\
&\le
\frac{1}{\sqrt{n}}(1+\|T\|+n\e')
<\e.
\end{align*}
Similarly, $\|Tw_{n-1}-w_0\|<\e$.
\end{proof}
In view of \eqref{inclusion} the following corollary of  Theorem \ref{connes} is immediate.
\begin{corollary}\label{rohlin}
Let $T\in B(H)$ be such that $r(T)\le 1$. Then the following statements are equivalent.

(i) $\TT\subset\si(T).$

(ii) for all $\e>0$, $n> \max (4\|T\|^2\e^{-2},1)$ and $u\in H$, $\|u\|=1$ there exist orthonormal vectors
$w_0,\dots,w_{n-1}\in H$ such that
\begin{align*}
\|Tw_j-w_{j+1}\|<\e, \qquad 0\le j\le n-2, \qquad
\|Tw_{n-1}-w_0\|<\e
\end{align*}
and
$$
\frac{1}{\sqrt n}\sum_{j=0}^{n-1}w_j=u.
$$
\end{corollary}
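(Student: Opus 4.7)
The plan is to reduce both implications immediately to Theorem \ref{connes}, using \eqref{inclusion} in one direction and the trivial inclusion $\sigma_{\pi e}(T)\subset \sigma(T)$ in the other. No new construction is required.

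First I would handle (i)$\Rightarrow$(ii). We are given $\TT\subset\sigma(T)$ together with $r(T)\le 1$. These are exactly the hypotheses of \eqref{inclusion}, so we conclude $\TT\subset\sigma_{\pi e}(T)$. The statement (ii) to be proved is formally identical to condition (ii) of Theorem \ref{connes}, so invoking the implication (i)$\Rightarrow$(ii) of that theorem produces, for every $\e>0$, every $n>\max(4\|T\|^2\e^{-2},1)$ and every unit vector $u\in H$, orthonormal vectors $w_0,\dots,w_{n-1}\in H$ with the required properties $\|Tw_j-w_{j+1}\|<\e$ for $0\le j\le n-2$, $\|Tw_{n-1}-w_0\|<\e$, and $\frac{1}{\sqrt{n}}\sum_{j=0}^{n-1}w_j=u$.

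For (ii)$\Rightarrow$(i), assume the orthonormal systems $w_0,\dots,w_{n-1}$ exist for every admissible $\e$, $n$ and $u$. This is precisely condition (ii) of Theorem \ref{connes}, so the implication (ii)$\Rightarrow$(i) of that theorem yields $\TT\subset\sigma_{\pi e}(T)$. Since $\sigma_{\pi e}(T)\subset\sigma_\pi(T)\subset\sigma(T)$ (recall that $\sigma_{\pi e}(T)$ is the set of $\lambda$ for which $T-\lambda$ fails to be bounded below on every finite-codimensional subspace, which is a strictly stronger requirement than failing to be bounded below), we obtain $\TT\subset\sigma(T)$, as required.

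There is essentially no obstacle here: the corollary is a direct consequence of Theorem \ref{connes} and the spectral inclusion \eqref{inclusion}. The only point worth noting is the role of the hypothesis $r(T)\le 1$, which is used only in the direction (i)$\Rightarrow$(ii) to upgrade membership in $\sigma(T)$ to membership in $\sigma_{\pi e}(T)$ via \eqref{inclusion}; this is necessary because the circle might in principle intersect the spectrum only at isolated eigenvalues of finite multiplicity, and the hypothesis $r(T)\le 1$ rules this out since $\TT\subset\partial\sigma(T)$ under this constraint, whence $\TT\setminus\sigma_{\pi e}(T)$ can contain only such isolated eigenvalues, forcing $\TT\subset\sigma_{\pi e}(T)$.
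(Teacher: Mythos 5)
Your proof is correct and follows exactly the route the paper intends: the corollary is stated there as an immediate consequence of Theorem \ref{connes} via the inclusion \eqref{inclusion} in one direction and $\sigma_{\pi e}(T)\subset\sigma(T)$ in the other. Your closing remark on why $r(T)\le 1$ is needed is just a re-derivation of \eqref{inclusion} and could be omitted, but it is sound.
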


\begin{remark}
Note that if we do not require the property $\frac{1}{\sqrt n}\sum_{j=0}^{n-1}w_j=u$ then there is an alternative direct approach to the construction of vectors $w_j$ as above. If  $T$ is unitary, $\e>0,$  $n>\max (2\pi\e^{-1},1),$ then for every  $w_0\in H$, $\|w_0\|=1,$ we construct vectors $w_1,\dots, w_{n-1}$ such that
\begin{align*}
\|T w_j-w_{j+1}\|<\e, \qquad j=0,1,\dots,n-2, \qquad
\|T w_{n-1}-w_0\|<\e,
\end{align*}
and, moreover, $$\sum_{j=0}^{n-1} w_j=0.$$

 Without loss of generality we may assume that $T$ is cyclic. Thus we may assume that $H=L^2(\mathbb T, \nu)$ for some probability measure $\nu$  and $T \in B(H)$ is defined by
 $$(Tx)(z)=zx(z), \qquad x\in L^2(\mathbb T, \nu), \quad z\in\TT \quad \text{a.e.}$$

For $z=e^{is}$ with $s\in[0,2\pi)$ and $n \in \mathbb N, n \ge 2,$ fix $t(z)\in[0,2\pi/n]$ such that
$s+t(z)\in\{\frac{2\pi}{n},\frac{4\pi}{n},\dots,\frac{2\pi (n-1)}{n}\}$ \ (note that $0$ and $2\pi$ are not elements of this set).
Clearly the function $z\mapsto t(z)$ is measurable (it has only a finite number of discontinuity points).

For $j=1,\dots,n$ define $w_j(z)=w_0(z)\cdot z^j e^{ijt(z)}, \,\, z\in\TT$. 
The family $\{ w_j: 0 \le j \le n-1\}$ is  orthonormal in $H,$ and $w_n=w_0$.
For $j=0,\dots,n-1$ we have
\begin{align*}
\|Tw_j-w_{j+1}\|^2&=
\int_\TT |w_0(z) z^{j+1}(e^{ijt(z)}-e^{i(j+1)t(z)})|^2 d\nu(z)\\
&\le
\int_\TT |w_0(z)|^2\cdot |1-e^{it(z)}|^2\, d\nu(z) \le
\Bigl(\frac{2\pi}{n}\Bigr)^2.
\end{align*}
So $$\|Tw_j-w_{j+1}\|\le \frac{2\pi}{n}<\e.$$

If $z=e^{is}, s\in[0,2\pi),$ then
$$\sum_{j=0}^{n-1} w_j(z)=
\sum_{j=0}^{n-1} w_0(z)e^{ijs}e^{ijt(z)} =0.
$$
\end{remark}

\section{Asymptotics for compressions of powers}

In this section we apply our numerical ranges ideology to study of
the interplay between the circle structure of the spectrum and the asymptotic properties  of
orbits. It appears that the property $\sigma (T)\supset \mathbb T$ strengthens the convergence of powers
of  $T \in B(H)$  on an appropriate, ``large'' subspace of $H.$
The property allows one to pass from the convergence of $(T^n)$  to zero in the weak operator topology
to the convergence to zero in the uniform operator topology of compressions of $(T^n)$ to an infinite-dimensional subspace.
At the same subspace, the norms of the compressions are as small as we please.

First, we will need the following numerical ranges lemma.
\begin{lemma}\label{lemmahamdan}
Let $T\in B(H)$ be such that $T^n\to 0$ in the weak operator topology. Suppose that for all $n\in \NN$,
\begin{equation}\label{we}
(0,\dots,0)\in W_e(T,\dots,T^n).
\end{equation}
Let $A\subset H$ be a finite set, $\e>0$ and $M\subset H$ be a subspace of a finite codimension. Then there exists a unit vector $x\in M$ such that
\begin{align*}
\sup_{n\ge 1}|\langle T^nx,x\rangle|\le\e,\quad \sup_{n\ge 1}|\langle T^nx,a\rangle|\le\e, \quad \text{and} \quad \sup_{n\ge 1}&|\langle T^{*n}x,a\rangle|\le\e,
\end{align*}
for all $a\in A.$
\end{lemma}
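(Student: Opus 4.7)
The plan is to construct $x$ as the norm-limit of an inductive refinement $(x_k)$ in $M$, exploiting Proposition~\ref{basic}. First I would observe that $T^n\to 0$ in WOT together with the uniform boundedness principle gives $\sup_n\|T^n\|<\infty$, so the WOT convergence is uniform on norm-compact subsets of $H$; in particular, for any fixed $x_k$ of unit norm there is an integer $N(x_k)$ past which $|\langle T^nx_k,x_k\rangle|$, $|\langle T^nx_k,a\rangle|$, and $|\langle T^{*n}x_k,a\rangle|$ are all below $\e/2$ uniformly in $a\in A$.

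Next I would construct $(x_k)$ inductively with $x_0=0$: at step $k$, having $x_k$ in the finite-codimensional subspace $M_k=M\cap\{T^ja,T^{*j}a:a\in A,\,1\le j\le n_k\}^\perp$ with $\|x_k\|^2=1-2^{-k}$ and $|\langle T^jx_k,x_k\rangle|$ small for $1\le j\le n_k$, choose $n_{k+1}>n_k$ exceeding the WOT transient time for the normalized version of $x_k$, and then apply Proposition~\ref{basic} with the tuple $(T,\dots,T^{n_{k+1}})$ inside $M_{k+1}$ to produce $x_{k+1}\in M_{k+1}$ with $\|x_{k+1}\|^2=1-2^{-k-1}$, $\|x_{k+1}-x_k\|^2=2^{-k-1}$, and shrinking $|\langle T^jx_{k+1},x_{k+1}\rangle|$ for $1\le j\le n_{k+1}$.

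By the Cauchy condition on $\|x_{k+1}-x_k\|$, the sequence converges to a unit vector $x\in M$. Since $x_k\in M_{k'}$ for all $k\ge k'$ and each $M_{k'}$ is closed, $x$ lies in $\bigcap_{k'}M_{k'}$, which forces $\langle x,T^{*j}a\rangle=\langle x,T^ja\rangle=0$ for every $j\ge 1$ and $a\in A$; similarly $\langle T^jx,x\rangle=0$ for every $j\ge 1$ by continuity of the quadratic form and the shrinking of $|\langle T^jx_k,x_k\rangle|$. All three sup bounds therefore hold with value zero, in fact much better than required.

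The main obstacle is that Proposition~\ref{basic} strictly requires the origin to lie in the \emph{interior} of $W_e$ via a ball $\{\|\epsilon\|\le r\}\subset W_e(\mathcal T)$, whereas the lemma's stated hypothesis is only $(0,\dots,0)\in W_e(T,\dots,T^n)$. In the intended applications (where $\sigma(T)\supset\TT$), Theorem~\ref{lambdawe} guarantees this interior condition automatically and Proposition~\ref{basic} applies directly. Otherwise one must adapt the inductive step using only the definition of $W_e$ via orthonormal sequences, producing at each stage $x_{k+1}$ with merely small (rather than zero) $|\langle T^jx_{k+1},x_{k+1}\rangle|$, and controlling accumulated errors so that the limit still satisfies all three bounds below $\e$ rather than exactly zero.
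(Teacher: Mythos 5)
Your proposed route has a genuine gap that you yourself identify but do not close: the entire construction hinges on Proposition~\ref{basic}, which requires a full ball $\{\e:\|\e\|\le r\}\subset W_e(T,\dots,T^n)$, whereas the lemma assumes only membership $(0,\dots,0)\in W_e(T,\dots,T^n)$. This is not a cosmetic weakening. In the paper's application (Corollary~\ref{hamdancorollarygeneral}) the case $0\in\partial\hat\sigma_e(T)$ yields exactly membership and not interior membership, so the lemma must be proved under the weaker hypothesis; your final paragraph only gestures at ``adapting the inductive step and controlling accumulated errors'' without doing it. Moreover, even if one grants the interior condition, your scheme has an unaddressed quantitative problem: the radius $r=r(n)$ furnished by Proposition~\ref{basic} depends on the tuple $(T,\dots,T^{n_{k+1}})$ and may shrink to $0$ as $n_{k+1}\to\infty$, so the input condition $|\langle T^jx_k,x_k\rangle|\le r(n_{k+1})2^{-k-1}$ for all $j\le n_{k+1}$ is not propagated by the output bound $r(n_k)2^{-k-1}$ of the previous stage; one would have to strengthen each step of Proposition~\ref{basic} to produce much smaller diagonal entries than it states. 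Finally, the claim that all three suprema come out \emph{exactly zero} amounts to asserting $(0,0,\dots)\in W(T,T^2,\dots)$ for the infinite tuple, which is far stronger than the lemma and should not be expected under its hypotheses.

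The paper's proof avoids all of this by a different, one-shot construction. Fix $s>16K^2\e^{-2}$ where $K=\sup_n\|T^n\|$, and build orthonormal unit vectors $u_1,\dots,u_s\in M$ together with times $n_1<\dots<n_s$: each $u_{r+1}$ is chosen, directly from the \emph{definition} of $W_e$ (a unit vector in any finite-codimensional subspace with $|\langle T^ju_{r+1},u_{r+1}\rangle|<\e/4$ for $j\le n_r$), orthogonal to the finitely many vectors $T^{\pm n}u_k$, $T^{\pm n}a$ with $n\le n_r$; then $n_{r+1}$ is taken past the WOT transient time of $u_1,\dots,u_{r+1}$. The final vector is the single average $x=s^{-1/2}\sum_{k=1}^su_k$. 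For $n\in(n_r,n_{r+1}]$ the double sum splits into: pairs with both indices $\le r$ (small by WOT), off-diagonal pairs with maximal index $\ge r+2$ (zero by orthogonality), diagonal terms with index $\ge r+2$ (small by the $W_e$ choice), and the one ``blind-spot'' block involving $u_{r+1}$, which is bounded by $s^{-1}K\sqrt{r+1}\le K/\sqrt{s}<\e/4$ thanks to the $1/\sqrt{s}$ normalization and power boundedness. This is precisely the mechanism your Cauchy-sequence scheme lacks, and it uses only $(0,\dots,0)\in W_e(T,\dots,T^n)$.
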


\begin{proof}
Clearly $T$ is power bounded by the uniform boundedness principle. Let $K=\sup\{\|T^n\|:n=0,1,\dots\}$.
It is apparent that also $T^{*n}\to 0$ in the weak operator topology.
Without loss of generality we may assume that $\max\{\|a\|:a\in A\}\le 1.$

Choose $s\in\NN$ such that $s>16K^2\e^{-2}$, and set formally $n_0=0$.

Choose $u_1\in M$ with $\|u_1\|=1$ arbitrarily. Choose $n_1>n_0$ such that
\begin{align*}
|\langle T^n u_1,u_1\rangle|&<\frac{\e}{4s}, \qquad n\ge n_1,\\
|\langle T^n u_1,a\rangle|&<\frac{\e}{4s}, \qquad n\ge n_1, a\in A,
\end{align*}
and
$$
|\langle T^{*n} u_1,a\rangle|<\frac{\e}{4s},\qquad n\ge n_1, a\in A.
$$

We construct unit vectors $u_2,\dots,u_s\in M$ in the following way:
Let $1\le r\le s-1$ and suppose that the unit vectors $u_1,\dots,u_r\in M$ and nonnegative integers $n_0<n_1<\cdots<n_r$ have already been constructed.

By assumption \eqref{we}, there exists a unit vector $u_{r+1}\in M$ such that
$$
u_{r+1}\perp\{T^nu_k, T^{*n}u_k,T^na,T^{*n}a:\, 0\le n\le n_r, 1\le k\le r, a\in A\}
$$
and
$$
|\langle T^nu_{r+1},u_{r+1}\rangle|< \frac{\e}{4}, \qquad 1\le n\le n_r.
$$
Find $n_{r+1}>n_r$ such that
$$
|\langle T^nu_{r+1},u_k\rangle|<\frac{\e}{4s},
$$
$$
|\langle T^{*n}u_{r+1},u_k\rangle|<\frac{\e}{4s},
$$
$$
|\langle T^nu_{r+1},a\rangle|<\frac{\e}{4s},
$$
and
$$
|\langle T^{*n}u_{r+1},a\rangle|<\frac{\e}{4s}
$$
for all $n\ge n_{r+1}$, $1\le k\le r+1$ and $a\in A$.

Let $u_1,\dots,u_s$ and $n_0,\dots,n_s$ be constructed in this way.
Set $$x=\frac{1}{\sqrt{s}}\sum_{k=1}^s u_k.$$
Clearly $x\in M.$ Moreover, $\|x\|=1$ since the vectors $u_k, 1 \le k \le s,$ are orthonormal.

For $n\ge n_s$ we have
$$
|\langle T^nx,x\rangle|\le
s^{-1}\sum_{k,k'=1}^s|\langle T^nu_k,u_{k'}\rangle|\le
s^{-1}s^2\frac{\e}{4s}<\e.
$$

Let $0\le r\le s-1$ and $n_r<n\le n_{r+1}$. Then
$$
|\langle T^nx,x\rangle|=
s^{-1}\Bigl|\sum_{k,k'=1}^s\langle T^nu_k,u_{k'}\rangle\Bigr|
$$
$$
\le
s^{-1}\sum_{k,k'=1}^{r}|\langle T^nu_k,u_{k'}\rangle| +
s^{-1}\sum_{k=1}^{r+1}|\langle T^nu_{r+1},u_{k}\rangle|+
s^{-1}\sum_{k=1}^{r}|\langle T^nu_k,u_{r+1}\rangle|
$$
$$
+
s^{-1}\sum_{k=r+2}^{s}|\langle T^nu_k,u_{k}\rangle|+
s^{-1}\sum_{1\le k,k'\le s, k\ne k'\atop \max\{k,k'\}\ge r+2}|\langle T^nu_k,u_{k'}\rangle|,
$$
where the last term is equal to $0$ by the construction.
So
\begin{align*}
|\langle T^nx,x\rangle|\le&
s^{-1}r^2\frac{\e}{4s}+
s^{-1}\|T^nu_{r+1}\|\cdot\|\sum_{k=1}^{r+1}u_k\|\\
+&
s^{-1}\|T^{*n}u_{r+1}\|\cdot\|\sum_{k=1}^{r}u_k\|
+s^{-1}(s-r-1)\frac{\e}{4}\\
\le&
\frac{\e}{4}
+s^{-1}K\sqrt{r+1}+s^{-1}K\sqrt{r}+\frac{\e}{4}
\le \e.
\end{align*}
Hence $$\sup_{n\ge 1}|\langle T^nx,x\rangle|\le \e.$$

Let $a\in A$. For $n\ge n_{s}$ we have
$$
|\langle T^nx,a\rangle|\le
\frac{1}{\sqrt{s}}\sum_{k=1}^s|\langle T^nu_k,a\rangle|\le
\frac{1}{\sqrt{s}}\cdot s\cdot\frac{\e}{4s}<\e.
$$
Let $0\le r\le s-1$ and $n_r\le n<n_{r+1}$. Then
\begin{align*}
|\langle T^nx,a\rangle|&\le
\frac{1}{\sqrt{s}}\sum_{k=1}^r|\langle T^nu_k,a\rangle|
+\frac{1}{\sqrt{s}}|\langle T^nu_{r+1},a\rangle|
+\frac{1}{\sqrt{s}}\sum_{k=r+2}^s|\langle T^nu_k,a\rangle|\\
&\le
\frac{1}{\sqrt{s}}\cdot r\cdot\frac{\e}{4s}+
\frac{1}{\sqrt{s}}\cdot K
<\e.
\end{align*}
So $$\sup_{n\ge 1}|\langle T^nx,a\rangle|\le\e$$ for all $a\in A$.

The property $\sup_{n\ge 1}|\langle T^{*n}x,a\rangle|\le\e$ for all $a \in A$ can be proved similarly.
\end{proof}

Now we are ready to use essential numerical ranges for the study of operator norm convergence. The following theorem is one of the main results of the paper.

\begin{theorem}\label{theoremhamdangeneral}
Let $T\in B(H)$ and let $T^n\to 0$ in the weak operator topology. Suppose that $(0,\dots,0)\in W_e(T,\dots,T^n)$ for all $n\in\NN$.  Then for every $\e>0$ there exists an infinite-dimensional subspace $L$ of $H$ such that
$$
\sup_{n\ge 1}\|P_LT^nP_L\|\le \e \qquad\text{and}\qquad \lim_{n\to\infty}\|P_L T^n P_L\|=0,
$$
where $P_L$ is the orthogonal projection on $L.$
\end{theorem}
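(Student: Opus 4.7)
The plan is to build an infinite orthonormal sequence $\{e_k\}_{k\ge 1}$ inductively by repeated application of Lemma \ref{lemmahamdan}, and to take $L:=\overline{\mathrm{span}}\{e_k\}$. Fix a rapidly decreasing sequence of tolerances, for instance $\e_k=\e\cdot 2^{-k-1}$. Having chosen orthonormal $e_1,\dots,e_k\in H$, I would apply Lemma \ref{lemmahamdan} with $A=\{e_1,\dots,e_k\}$, with $M$ the (finite-codimensional) orthogonal complement of $\mathrm{span}\{e_1,\dots,e_k\}$, and with parameter $\e_{k+1}$. This yields a unit vector $e_{k+1}\in M$ satisfying
$$\sup_{n\ge 1}|\langle T^n e_{k+1},e_{k+1}\rangle|\le\e_{k+1},$$
and, for each $1\le i\le k$,
$$\sup_{n\ge 1}|\langle T^n e_{k+1},e_i\rangle|\le\e_{k+1},\qquad \sup_{n\ge 1}|\langle T^{*n} e_{k+1},e_i\rangle|\le\e_{k+1}.$$
Since $\langle T^{*n}e_{k+1},e_i\rangle=\overline{\langle T^n e_i,e_{k+1}\rangle}$, the construction guarantees
$$|\langle T^n e_i,e_j\rangle|\le\e_{\max(i,j)}\qquad(n\ge 1,\ i,j\ge 1),$$
and $L$ is automatically infinite-dimensional.

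To verify the uniform bound $\sup_{n\ge 1}\|P_L T^n P_L\|\le\e$, I would estimate, for $y=\sum_i\alpha_i e_i$ and $z=\sum_j\beta_j e_j$ in $L$ with $\|y\|,\|z\|\le 1$,
$$|\langle T^n y,z\rangle|\le\sum_{i,j}|\alpha_i|\,|\beta_j|\,\e_{\max(i,j)},$$
and then apply the Schur test to the kernel $c_{ij}:=\e_{\max(i,j)}$. A direct computation gives $\sum_j c_{ij}\le (i+1)\e\cdot 2^{-i-1}\le \e/2$ and symmetrically $\sum_i c_{ij}\le \e/2$, so the Schur bound yields $|\langle T^n y,z\rangle|\le\e/2$ uniformly in $n$. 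If one wants the sharp constant $\e$ rather than $\e/2$, a universal factor can be absorbed into the initial choice of $(\e_k)$.

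For the decay $\|P_L T^n P_L\|\to 0$, I would use a truncation. Let $P_N$ denote the orthogonal projection onto $\mathrm{span}\{e_1,\dots,e_N\}\subset L$, and write
$$P_L T^n P_L=P_N T^n P_N+(P_L-P_N)T^n P_L+P_N T^n(P_L-P_N).$$
The last two terms involve only matrix entries $\langle T^n e_i,e_j\rangle$ with $\max(i,j)>N$; the same Schur estimate applied to the tail bounds their norms by some $\eta(N)\to 0$ as $N\to\infty$, uniformly in $n$. For fixed $N$, the first term is a compression to a finite-dimensional subspace whose matrix entries $\langle T^n e_i,e_j\rangle$ tend to $0$ as $n\to\infty$, by the hypothesis $T^n\to 0$ in the weak operator topology, so $\|P_N T^n P_N\|\to 0$ as $n\to\infty$. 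Given $\delta>0$, first choose $N$ with $\eta(N)<\delta/2$ and then $n_0$ with $\|P_N T^n P_N\|<\delta/2$ for $n\ge n_0$; combining gives $\|P_L T^n P_L\|<\delta$ for $n\ge n_0$.

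I expect the main technical care to lie in the Schur-type bookkeeping of the last two paragraphs, namely in converting the pointwise bound $|\langle T^n e_i,e_j\rangle|\le\e_{\max(i,j)}$ into operator-norm estimates that are \emph{simultaneously} uniform in $n$ and summable on the tail. Everything else is driven by Lemma \ref{lemmahamdan}, which is the only place where the essential numerical range hypothesis $(0,\dots,0)\in W_e(T,\dots,T^n)$ is used; the weak-operator-topology hypothesis $T^n\to 0$ enters only at the very end to collapse the finite-rank truncations to zero.
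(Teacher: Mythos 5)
Your proposal is correct, and it diverges from the paper's argument in an instructive way. The construction step is essentially the one in the paper: both iterate Lemma \ref{lemmahamdan} with $A$ the previously chosen vectors and $M$ a finite-codimensional subspace forcing orthonormality, with geometrically decaying tolerances. The difference is in what is recorded and how the norm estimates are extracted. The paper additionally builds an increasing sequence of times $n_1<n_2<\cdots$ and imposes \emph{exact} orthogonality $y_{r+1}\perp\{T^m y_k,T^{*m}y_k:\,m<n_r\}$, so that for $n\in[n_r,n_{r+1})$ all off-diagonal entries $\langle T^n y_k,y_{k'}\rangle$ with $\max(k,k')\ge r+2$ vanish; a single estimate then gives $|\langle T^n y,y\rangle|\le 2^{-r-1}\e$ for unit $y\in L$, and the inequality $\|S\|\le 2w(S)$ converts this numerical-radius bound into $\|P_LT^nP_L\|\le 2^{-r}\e$, yielding both conclusions at once (with an explicit decay rate along the $n_r$). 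You instead drop the time bookkeeping entirely, extract the clean $n$-independent kernel bound $|\langle T^n e_i,e_j\rangle|\le\e_{\max(i,j)}$, and prove the two conclusions separately: the uniform bound via the Schur test (which correctly gives $\e/2$ with your choice $\e_k=\e 2^{-k-1}$, since $(i+1)2^{-i-1}\le 1/2$ for $i\ge1$), and the decay via the decomposition $P_LT^nP_L=P_NT^nP_N+(P_L-P_N)T^nP_L+P_NT^n(P_L-P_N)$, where the tail terms are small uniformly in $n$ by the same Schur estimate and the finite block tends to $0$ by the weak-operator-topology hypothesis. Your route is more modular and avoids the numerical-radius inequality, at the cost of losing the paper's quantitative rate of decay; both are complete proofs, and in both the essential numerical range hypothesis enters only through Lemma \ref{lemmahamdan}.
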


\begin{proof}
Let $n_0=1$. By Lemma \ref{lemmahamdan}, there exists $y_1\in H$, $\|y_1\|=1,$ such that
$$
\sup_{n\ge 1}|\langle T^ny_1,y_1\rangle|<\frac{\e}{2}.
$$
Find $n_1>n_0$ satisfying
$$
|\langle T^ny_1,y_1\rangle|<\frac{\e}{4}, \qquad n\ge n_1.
$$
We construct inductively unit vectors $y_2, y_3,\dots\in H$ and nonnegative integers $n_2<n_3<\cdots$ in the following way:

Let $r\in \mathbb N$ and suppose that $y_1,\dots, y_r$ and  $n_1,\dots,n_r$ have already been constructed.
By Lemma \ref{lemmahamdan}, there exists $y_{r+1}\in H$ such that
\begin{align*}
\|y_{r+1}\|&=1,\\
y_{r+1}&\perp\{T^ny_k, T^{*n}y_k: 0\le n<n_r, 1\le k\le r\},
\\
\sup_{n\ge 1} |\langle T^ny_{r+1},y_{r+1}\rangle|&<\frac{\e}{2^{r+3}(r+1)},
\\
\sup_{n\ge 1} |\langle T^ny_{r+1},y_{k}\rangle|&<\frac{\e}{2^{r+3}(r+1)}, \qquad 1\le k\le r,
\\
\sup_{n\ge 1} |\langle T^{*n}y_{r+1},y_{k}\rangle|&<\frac{\e}{2^{r+3}(r+1)}, \qquad 1\le k\le r.
\end{align*}
Find $n_{r+1}>n_r$ satisfying
$$
|\langle T^ny_k,y_{k'}\rangle|\le \frac{\e}{2^{r+4}(r+1)}, \qquad 1\le k,k'\le r+1, n\ge n_{r+1}.
$$

Now suppose that  $y_k$ and  $n_k$  have been constructed as above.
Let $$L=\bigvee_{r=1}^\infty y_r.$$ Clearly $L$ is an infinite-dimensional subspace with an orthonormal basis $(y_r).$

Let $y\in L$, $\|y\|=1$. Then
$$y=\sum_{k=1}^\infty \al_k y_k \qquad \text{with}\qquad  \sum_{k=1}^\infty|\al_k|^2=1.$$
Note that $\sum_{k=1}^r|\al_k|\le \sqrt r$ for all $r \in \mathbb N$.

Let $r\in \mathbb N\cup\{0\},$ $n\in\NN$, and $n_r\le  n< n_{r+1}$. Then
$$\langle T^ny_k,y_{k'}\rangle=0$$
 if $k\ne k'$ and $
\max\{k,k'\}\ge r+2.$
So
\begin{align*}
|\langle T^ny,y\rangle|\le&
\sum_{k,k'=1}^r|\al_k\bar\al_{k'}|\cdot|\langle T^ny_k,y_{k'}\rangle|+
\sum_{k=1}^{r+1}|\al_{r+1}\bar\al_k|\cdot |\langle T^ny_{r+1},y_{k}\rangle|\\
+&
\sum_{k=1}^r|\al_k\bar\al_{r+1}|\cdot |\langle T^ny_k,y_{r+1}\rangle|+
\sum_{k=r+2}^\infty|\al_k|^2\cdot |\langle T^ny_k,y_{k}\rangle|
\\
\le&
r\cdot\frac{\e}{2^{r+3}r}+
\sqrt{r+1}\cdot\frac{\e}{2^{r+3}(r+1)}+
\sqrt{r}\cdot\frac{\e}{2^{r+3}(r+1)}+
\frac{\e}{2^{r+4}(r+2)}\\
<&\frac{\e}{2^{r+1}}.
\end{align*}
Thus, if $n_r\le n\le n_{r+1}$ then the numerical radius $w(P_LT^nP_L)$ of $P_LT^nP_L$ satisfies
$$w(P_LT^nP_L):=\sup\bigl\{|\langle P_LT^nP_L y,y\rangle|: y\in H, \|y\|=1\bigr\}\le 2^{-r-1}\e.$$
Since for any $T \in B(H),$ one has $\|T\|\le 2 w(T)$ (see e.g. \cite[p. 33]{Halmos} or \cite[Theorem 1.3-1]{Gustafson}),
we infer that
$$\|P_LT^nP_L\|\le 2^{-r}\e.$$
Hence $$\sup_{n\ge 1}\|P_LT^nP_L\|\le\e \quad \text{and} \quad \lim_{n\to\infty}\|P_LT^nP_L\|=0.$$
\end{proof}

The next corollary of Theorem \ref{theoremhamdangeneral} replaces the numerical ranges condition $(0,\dots,0)\in W_e(T,\dots,T^n), n\in\NN,$
taking into account \emph{all} powers of $T,$
by the more transparent spectral assumption $0\in\widehat\sigma_e(T).$

\begin{corollary}\label{hamdancorollarygeneral}
Let $T\in B(H)$,
and let $T^n\to 0$ in the weak operator topology. Suppose that $0\in\widehat\sigma_e(T).$ Then for every  $\e>0$ there exists an infinite-dimensional subspace $L$ of $H$ such that
$$
\sup_{n\ge 1}\|P_LT^nP_L\|\le \e \qquad\hbox{and}\qquad \lim_{n\to\infty}\|P_LT^nP_L\|=0,
$$
where $P_L$ is the orthogonal projection on $L.$
In particular, this is true if the assumption $0\in\widehat\sigma_e(T)$ is replaced by $\mathbb T \subset \sigma (T).$
\end{corollary}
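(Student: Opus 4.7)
The plan is to derive the corollary by verifying the hypothesis $(0,\dots,0)\in W_e(T,T^2,\dots,T^n)$ of Theorem \ref{theoremhamdangeneral} for every $n\in\NN$; the ``In particular'' statement will then follow from a short spectral argument showing that $\TT\subset\sigma(T)$ forces $0\in\hat\sigma_e(T)$.

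To verify the hypothesis I would split on where $0$ sits inside $\hat\sigma_e(T)=\hat\sigma_{\pi e}(T)$. The bounded components of $\CC\setminus\sigma_{\pi e}(T)$ are open and therefore contained in $\Int\hat\sigma_{\pi e}(T)$, so $\hat\sigma_{\pi e}(T)\subset\sigma_{\pi e}(T)\cup\Int\hat\sigma_{\pi e}(T)$ and exactly one of two cases occurs. Either $0\in\Int\hat\sigma_{\pi e}(T)\subset\Int\hat\sigma(T)$, in which case Theorem \ref{lambdawe} instantly yields $(0,\dots,0)\in\Int W_e(T,T^2,\dots,T^n)\subset W_e(T,T^2,\dots,T^n)$; or $0\in\sigma_{\pi e}(T)$, in which case the standard orthonormal-sequence characterization of the essential approximate point spectrum (obtained from the definition given in Section \ref{prelim} by inductively choosing unit vectors in $\{x_1,\dots,x_{k-1}\}^\perp$) supplies an orthonormal sequence $(x_k)\subset H$ with $Tx_k\to 0$; then $\|T^jx_k\|\le\|T\|^{j-1}\|Tx_k\|\to 0$ and so $\langle T^jx_k,x_k\rangle\to 0$ for every $j\ge 1$, giving $(0,\dots,0)\in W_e(T,T^2,\dots,T^n)$ straight from the definition. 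Either way Theorem \ref{theoremhamdangeneral} applies and produces the required infinite-dimensional subspace $L$.

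For the ``In particular'' assertion I would first note that $T^n\to 0$ in WOT forces power-boundedness by the uniform boundedness principle, so $r(T)\le 1$ and $\sigma(T)\subset\overline{\mathbb D}$. By the fact recalled in Section \ref{prelim} that $\sigma(T)\setminus\hat\sigma_e(T)$ consists of isolated points of $\sigma(T)$ and is hence at most countable, the set $\TT\cap\hat\sigma_e(T)$ is a closed subset of $\TT$ with at most countable complement; since every nonempty open subset of $\TT$ is uncountable, this complement must be empty and $\TT\subset\hat\sigma_e(T)$. A brief connectedness argument---the unbounded component of $\CC\setminus\sigma_e(T)$ contains $\infty$, is disjoint from $\TT$, and therefore lies entirely in $\{|z|>1\}$---then promotes this to $\overline{\mathbb D}\subset\hat\sigma_e(T)$, so in particular $0\in\Int\hat\sigma_e(T)$ and the first part applies.

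The only mildly delicate point in the whole plan is the boundary case $0\in\partial\hat\sigma_e(T)$, where Theorem \ref{lambdawe} does not directly apply; but the standard inclusion $\partial\hat K\subset K$, valid for any compact $K\subset\CC$ and applied to $K=\sigma_{\pi e}(T)$, places $0$ into $\sigma_{\pi e}(T)$ and the orthonormal-sequence argument above takes over. Everything else is a direct appeal to results already established in the paper.
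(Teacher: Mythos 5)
Your proposal is correct and follows essentially the same route as the paper: both reduce the claim to the hypothesis $(0,\dots,0)\in W_e(T,\dots,T^n)$ of Theorem \ref{theoremhamdangeneral} by splitting on whether $0$ lies in $\Int\hat\sigma_e(T)$ (where Theorem \ref{lambdawe} applies) or on the boundary, where $\partial\hat\sigma_{\pi e}(T)\subset\sigma_{\pi e}(T)$ and the inclusion $\sigma_{\pi e}(T,\dots,T^n)\subset W_e(T,\dots,T^n)$ finish the job. Your derivation of the ``in particular'' clause via the countability of $\sigma(T)\setminus\hat\sigma_e(T)$ just re-proves the implication \eqref{inclusion} that the paper invokes directly, so it is only a cosmetic difference.
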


\begin{proof}
We consider two cases. If $0 \in {\rm Int} \, \widehat\sigma_e(T)$ then by Theorem  \ref{lambdawe} we have  $0\in W_e(T, \dots, T^n)$ for every $n \in \mathbb N,$ and the statement follows from Theorem \ref{theoremhamdangeneral}.

 On the other hand, if
$0 \in \partial \widehat\sigma_e(T),$ then using elementary properties of polynomial convex hulls and \cite[Proposition III.19.1]{Muller}, we have
$$
0 \in \partial \widehat\sigma_e(T) \subset \partial \sigma_e(T) \subset \sigma_{\pi e}(T),
$$
so that
$$
(0, \dots, 0) \in \sigma_{\pi e} (T, \dots, T^n)\subset W_e(T, \dots, T^n),
$$
for every $n \in \mathbb N.$
(Alternatively,
using \cite[Proposition III.19.1 and Corollary III.19.16]{Muller}, one may note that
$$
0 \in \partial \widehat\sigma_e(T) = \partial \widehat \sigma_{\pi e} (T) \subset \sigma_{\pi e}(T), \qquad n \in \mathbb N,
$$
and then $(0, \dots, 0) \in W_e(T, \dots, T^n)$ as above.)
Thus $$(0,\dots,0)\in W_e(T,\dots,T^n), \qquad   n \in \mathbb N,$$ again, and we can use Theorem \ref{theoremhamdangeneral}.

If $\mathbb T \subset \sigma (T),$ then \eqref{inclusion}  yields
$\mathbb T \subset \sigma_e (T),$ so that $0 \in\widehat\sigma_e(T).$
\end{proof}
\begin{remark}
Observe that one can replace the assumption $0\in\widehat\sigma_e(T)$ in Corollary \ref{hamdancorollarygeneral} by $0 \in \Int \widehat \sigma(T).$
\end{remark}
If $T$ is unitary then the above corollary can be sharpened.
The result below is an essential generalization of the main result in \cite{Hamdan} (with a completely different proof).

\begin{corollary}\label{hamdancorollaryunitary}
Let $T$ be a unitary operator on $H$ such that  $T^n\to 0$ in the weak operator topology.
Then the following
conditions are equivalent.

(i) $\si(T)=\TT.$

(ii) for every $\e>0$ there exists $x\in H$, $\|x\|=1,$ with $$\sup_{n\ge 1}|\langle T^nx,x\rangle|<\e.$$

(iii) for every $\e>0$ there exists an infinite-dimensional subspace $L\subset H$ such that
$$
\sup_{n\ge 1}\|P_L T^nP_L\|\le \e \qquad\hbox{and}\qquad \lim_{n\to\infty}\|P_L T^nP_L\|=0,
$$
where $P_L$ is the orthogonal projection on $L.$
\end{corollary}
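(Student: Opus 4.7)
The plan is to establish the cyclic implications $(i) \Rightarrow (iii) \Rightarrow (ii) \Rightarrow (i)$. Since $T$ is unitary, the inclusion $\sigma(T) \subseteq \TT$ is automatic, so the content of (i) is just the reverse inclusion $\TT \subseteq \sigma(T)$.

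The implication $(i) \Rightarrow (iii)$ is immediate from Corollary \ref{hamdancorollarygeneral}: the standing assumption that $T^n \to 0$ in the weak operator topology, together with $\TT \subset \sigma(T)$, is precisely the hypothesis of that corollary (via \eqref{inclusion}), and its conclusion is (iii).

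For $(iii) \Rightarrow (ii)$, fix $\e > 0$, apply (iii) with $\e/2$ in place of $\e$, and pick any unit vector $x$ in the resulting infinite-dimensional subspace $L$. Since $P_L x = x$, one has
$$
|\langle T^n x, x\rangle| = |\langle P_L T^n P_L x, x\rangle| \le \|P_L T^n P_L\| \le \e/2 < \e
$$
for every $n \ge 1$, which is (ii).

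The main work is $(ii) \Rightarrow (i)$, for which I would follow the strategy used in the proof of Theorem \ref{circlepb}, (ii)$\Rightarrow$(i). Fix $\lambda \in \TT$ and for each $k \in \NN$ select a unit vector $x_k$ coming from (ii) with the choice $\e = k^{-3}$. Form the Fej\'er-type sum
$$
y_k := \sum_{j=0}^{k-1} \lambda^{-j} T^j x_k.
$$
A telescoping calculation gives $(T-\lambda) y_k = \lambda^{-k+1} T^k x_k - \lambda x_k$, whose norm is bounded by $2$ since $T$ is an isometry. For the lower bound on $\|y_k\|^2$, unitarity allows us to rewrite each off-diagonal inner product $\langle T^j x_k, T^{j'} x_k\rangle$ (with $j \ne j'$) as $\langle T^{|j-j'|} x_k, x_k\rangle$ up to complex conjugation, so by (ii) each such term has absolute value less than $k^{-3}$; the $k$ diagonal terms contribute $k$, while the $k(k-1)$ off-diagonal ones contribute at most $k^{-1}$ in total, yielding $\|y_k\|^2 \ge k - k^{-1}$. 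Consequently $y_k / \|y_k\|$ is a Weyl sequence for $T - \lambda$, whence $\lambda \in \sigma_\pi(T) \subseteq \sigma(T)$, and (i) follows. The only technical obstacle is the bookkeeping in the bound on $\|y_k\|^2$, but the unitarity of $T$ makes this cleaner than in the analogous step of Theorem \ref{circlepb}, where one must exploit the weaker orthogonality hypothesis for each pair $(m,j)$ separately.
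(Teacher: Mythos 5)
Your proposal is correct and follows essentially the same route as the paper: (i)$\Rightarrow$(iii) via Corollary \ref{hamdancorollarygeneral} together with \eqref{inclusion}, (iii)$\Rightarrow$(ii) trivially, and (ii)$\Rightarrow$(i) by the approximate-eigenvector construction. The only cosmetic difference is that the paper disposes of (ii)$\Rightarrow$(i) by citing Theorem \ref{circlegeneral} (after noting that for unitary $T$ the hypothesis in (ii) yields $|\langle T^mx,T^jx\rangle|=|\langle T^{|m-j|}x,x\rangle|<\e$ and $\|T^jx\|=1$), whereas you inline the same Weyl-sequence computation, which is fine and in fact slightly more elementary since you only need $\la\in\si_\pi(T)$ rather than $\la\in\si_{\pi e}(T)$.
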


\begin{proof}
The implication (ii)$\Rightarrow$(i) follows from Theorem \ref{main1}.
If $\si(T)=\TT$ then $\si_{\pi e}(T)=\TT$ by \eqref{inclusion}. So (i)$\Rightarrow$(iii) follows from the previous corollary. The implication (iii)$\Rightarrow$(ii) is trivial.
\end{proof}
\section{Acknowledgements}
The authors would like to thank the referee for helpful comments and remarks
that essentially streamlined our original arguments used in Section \ref{numersection},
 and significantly improved our presentation.

\end{document}